\theoremstyle{definition}
\newtheorem{Def}{Definition}[section]
\newtheorem{Thm}[Def]{Theorem}
\newtheorem{Prop}[Def]{Proposition}
\newtheorem{Rem}[Def]{Remark}
\newtheorem{Ex}[Def]{Example}
\newtheorem{Cor}[Def]{Corollary}
\newtheorem{Lem}[Def]{Lemma}
\numberwithin{equation}{section}
\title{On the kernel of the theta operator mod $p$}
\author{Siegfried B\"{o}cherer, Hirotaka Kodama and Shoyu Nagaoka}
\date{}
\begin{document}

\maketitle

\begin{abstract}
We construct many examples of level one Siegel modular forms
in the kernel of theta operators mod $p$ by using theta series
attached to positive definite quadratic forms.

\end{abstract}
\section{Introduction}
\label{intro}

Ramanujan's $\theta$ operator is a familiar topic in the theory of elliptic 
modular forms, defined by 
$$
f=\sum a(n)e^{2\pi i nz}\longmapsto \theta(f)=\frac{1}{2\pi i} 
f'=\sum n a(n)e^{2\pi i nz}
$$ 
For Siegel modular forms of degree $n$, the Fourier expansion runs over
positive semidefinite half-integral matrices of size $n$ and we can define 
several analogues of the Ramanujan $\theta$ operator: For $1\leq r\leq n$
we may introduce
\begin{align*}
F=\sum_T a(T) e^{2\pi i \text{tr}(TZ)}\longmapsto 
\Theta^{[r]}(F) &:= 
\frac{1}{(2\pi i)^r}\left(\frac{\partial}{\partial_{ij}}\right)^{[r]}F\\
                &=\sum_T T^{[r]}a(T)e^{2\pi i \text{tr}(TZ)},
\end{align*}
where, for a matrix $A$ of size $n$, we denote by $A^{[r]}$ the matrix
of all the determinants of its submatrices of size $r$ and 
$\partial_{ij}:=\frac{1}{2}(1+\delta_{ij})\frac{\partial}{\partial z_{ij}}$.

In general, $\Theta^{[r]}(F)$ is no longer a modular form, but it is a
modular form mod $p$ (even a $p$-adic modular form), vector-valued if $r<n$, 
see \cite{B-N3}.\\
Our aim in the present paper is to explore the existence and explicit 
construction of Siegel modular forms which are in the kernel of such 
$\Theta$-operators mod $p$. Obvious candidates for such modular forms are
 theta series $\vartheta_S^n$ 
$$
\vartheta^n_S(Z)=\sum_{X\in {\mathbb Z}^{(n,n)}}e^{2\pi i \text{tr}({}^tXSXZ)},
$$
attached to 
positive quadratic forms $S$ of rank $n$ and level being a positive power of $p$; 
we will also consider variants
of this involving a harmonic polynomial.  
Looking at the Fourier expansion, evidently such theta series
are in the kernel of $\Theta^{[n]}$ mod $p$. On the other hand, these theta series
are not of level one and one has to do a level change to level one.
This method only works for even degree $n$, because we otherwise
enter into the realm of modular forms of half-integral weight, but we shall
exhibit a somewhat weaker variant of our method also for the case of odd $n$. 
On the other hand, for even degree, our method provides plenty of 
examples for level one
forms $F$ (of weight in an arbitrary congruence class modulo $p-1$) which
satisfy $\Theta^{[j]}(F)\equiv 0 \pmod{p}$ and $\Theta^{[j-1]}(F)\not\equiv 0 \pmod{p}$.
Here $j$ is almost arbitrary, the only obstruction comes from the
arithmetic of quadratic forms, which puts some constraint on $(n,j,p)$.
\\
We have to make an important comment on what we mean by  
``explicit construction''
here: The kernel of $\Theta^{[j]}$ mod $p$ is a notion which depends only on 
modular forms mod $p$, therefore the weight of the constructed modular form is
only of interest mod $(p-1)$. On the other hand, one is also interested in
explicit small weights for which we can get modular  
forms in the kernel mod $p$. In this paper we address both versions of explicit
construction, we will call them ``weak construction'' and 
``strong construction'' respectively; in most cases our ``strong construction''
also gives the smallest possible weight, which is called ``filtration'' in the work of Serre
and Swinnerton-Dyer, see \cite{B-K-T} for details. 
In the final section we also show that some of the known examples of 
congruences
for  degree two Siegel modular forms can be explained by our methods. 
\\
Finally we remark that most of our results are formulated for odd primes only.
The reader interested in $p=2$ may adjust some of our results and methods
to $p=2$.

\section{Preliminaries}
\label{Prel}
\subsection{Siegel modular forms}
\label{Siegel}
For standard facts about Siegel modular forms we refer to \cite{A,F,Kl}. 
The group $Sp(n,\mathbb R)$ acts on the upper half space $\mathbb H_n$ in the usual way. 
For an integer $k$, a function $f:\mathbb H_n \longrightarrow \mathbb C$ and 
$M=\binom{AB}{CD}$ we define the slash operator by 

$$(f\mid_k M)(Z):=\mathrm{det}(CZ+D)^{-k}f((AZ+B)(CZ+D)^{-1}).$$
For a congruence subgroup $\Gamma $ of $Sp(n,\mathbb Z)$ and a character $\chi $ of $\Gamma$ 
we denote by $M_k(\Gamma ,\chi)$ the space of Siegel modular forms for $\Gamma $ of weight 
$k$ and character $\chi $ and $S_k(\Gamma ,\chi)$ the subspace consisting of
cusp forms.
If $\chi $ is trivial, we just omit it. We will mainly be concerned 
with congruence subgroups of type
$$
\Gamma _0^n(N):=\left\{M=\binom{AB}{CD}\mid C\equiv 0\bmod{N}\right\}
$$
and with groups arising from these by conjugation within $Sp(n,\mathbb Z)$. If $N=1$ we just write 
$\Gamma ^n$ instead of $\Gamma_0^n(1)$. The only characters of $\Gamma_0^n(N)$ occuring are 
those arising from Dirichlet characters mod $N$ in the usual way (i.e. $\chi (M)=\chi (\mathrm{det}D)$), 
the most important one will be the quadratic character
$$
\chi _p(*):=\left(\frac{(-1)^{\frac{p-1}{2}}p}{*}\right)
$$
for an odd prime $p$.
If $f$ is an element of $M_k(\Gamma)$ for an arbitrary $\Gamma$, then $f$ has a Fourier expansion
$$
f(Z)=\sum_Ta(T;f)e^{2\pi i\mathrm{tr}(TZ)}
$$
where $T$ runs over positive semidefinite rational symmetric matrices with bounded denominator. 
In particular, for $\Gamma =\Gamma _0^n(N)$, $T$ runs over positive semidefinite matrices in 
$$\Lambda _n:=\left\{T=(t_{ij})\in \mathrm{Sym}_n(\mathbb Q)\mid t_{ii},2t_{ij}\in \mathbb Z\right\}.$$ 

For a subring $R$ of $\mathbb C$ we denote by $M_k(\Gamma )(R)$ the set of $f\in M_k(\Gamma )$ 
whose Fourier coefficients lie in $R$.
\subsection{Traces}
\label{Traces}
We need the explicit form of the trace map
$$
\mathrm{Tr}:\begin{cases}M_k(\Gamma_0^n(p)) \longrightarrow M_k(\Gamma^n)\\
\ \ \ \ \ \ f \ \ \ \ \ \longmapsto \sum_\gamma f\mid _k\gamma \end{cases}
$$
where $\gamma $ runs over $\Gamma_0^n(p)\setminus \Gamma ^n$, see also \cite{B-F-S}. 
To obtain an explicit set of representatives for these cosets we start from a Bruhat decomposition 
over the finite field $\mathbb F_p:$
$$
Sp(n,\mathbb F_p)=\bigcup_{j=0}^nP(\mathbb F_p)\cdot \omega_j \cdot P(\mathbb F_p),
$$
where $P\subset Sp(n,\mathbb F_p)$ denotes the Siegel parabolic defined by $C=0$ and for $0\leq j\leq n$ we put
$$
\omega _j=\begin{pmatrix}0_j&0&-1_j&0\\0&1_{n-j}&0&0_{n-j}\\1_j&0&0_j&0\\0&0_{n-j}&0&1_{n-j}\end{pmatrix}.
$$
Using the Levi decomposition $P=MN$ with Levi factor
$$
M=M_n=\left\{m(A)=\begin{pmatrix}A&0\\0&{}^tA^{-1}\end{pmatrix}\mid A\in GL_n(\mathbb F_p)\right\}
$$
and unipotent radical
$$
N=\left\{n(B)=\begin{pmatrix}1_n&B\\0&1_n\end{pmatrix}\mid B\in M_n(\mathbb F_p) \ \ \text{symmetric}\right\}
$$
we easily see that
$$
\left\{\omega _j\cdot n(B_j)\cdot m(A)\mid B_j\in M_j(\mathbb F_p) \ \text{symmetric}, A\in P_{n,j}(\mathbb F_p)
\setminus GL_n(\mathbb F_p)\right\}
$$
is a complete set of representatives for the cosets $P(\mathbb F_p)\setminus P(\mathbb F_p)\cdot \omega _j\cdot P(\mathbb F_p)$. 
Here $M_j$ is naturally embedded into $M_n$ by $B_j\longmapsto \begin{pmatrix}B_j&0\\0&0_{n-j}\end{pmatrix}$ and 
$P_{n,j}$ is a standard parabolic subgroup of $GL_n$ defined by $0^{(n-j,j)}$ being the lower left corner of $g$. 
We tacitly identify the matrices above with corresponding representatives with entries in $\mathbb Z$ and obtain sets of 
representatives for $\Gamma _0^n(N)\setminus \Gamma^n$.\\
We analyse the contribution of fixed $j$ to the trace of a given $f\in M_k(\Gamma_0^n(p)):$\\ 
The function $f\mid _k\omega _j$ is itself a modular form for the group conjugate to $\Gamma_0^n(p)$ by $\omega_j$, 
it has a Fourier expansion
$$
f\mid _k\omega _j(Z)=\sum_{T=(t_{lm})}a_{j}(T;f)e^{2\pi i\mathrm{tr}(TZ)},
$$
where the $t_{lm}$ are integral or semi-integral except for the $t_{lm}$ in the upper left block of size $j$ in $T$, 
where $p$ may occur in the denominator. Then an elementary calculation using orthogonality of exponential sums shows that

$$\sum_{B_j}(f\mid_k\omega_j)\mid_k n(B_j)(Z)=p^{\frac{j(j+1)}{2}}\sum_{T\in \Lambda _n}a_{j}(T;f)e^{2\pi i\mathrm{tr}(TZ)}.$$ 
The result of the action of the matrices $m(A)$ is $:$ 
$$
\sum_{B_j,A}f\mid _k(\omega _j\cdot n(B_j)\cdot m(A))=p^{\frac{j(j+1)}{2}}
\sum_{T\in \Lambda _n}b_{j}(T;f)e^{2\pi i\mathrm{tr}(TZ)}
$$
with
$$
b_{j}(T;f)=\sum_{A}a_{j}\left(A^{-1}T\ ^tA^{-1};f\right).
$$
We can therefore write the contribution of a fixed $j$ to the trace as
$$
p^{\frac{j(j+1)}{2}}f\mid_k\omega_j \mid \widetilde{U} _j(p),
$$
where $\widetilde{U} _j(p)$ is an operator which maps a Fourier series to a new Fourier series, 
where the new coefficients are certain finite sums of the Fourier coefficients in the series we started from. 
Most of time the exact shape of this operator will not be important for us. 
At some point however we have to consider the actions of $n(B_j)$
and $m(A)$ separately and we split the operator 
accordingly into two pieces as
\begin{equation}
\label{Up}
\widetilde{U}_j(p)= \widetilde{U}^0_j(p) \circ D_j(p).
\end{equation}
We just mention the extreme cases$:$ For $j=0$ the operator $\widetilde{U}_0(p)$ is just the identity 
and $\widetilde{U} _n(p)$ is quite similar to the usual $U(p)$-operator$:$ 
$$
\widetilde{U} _n(p): \sum_{T\in \frac{1}{p}\Lambda _n}a(T)e^{2\pi i\mathrm{tr}(TZ)}\longmapsto 
\sum_{T\in \Lambda _n}a(T)e^{2\pi i\mathrm{tr}(TZ)}.
$$
Using this terminology we decompose the trace into $n+1$ pieces:

\begin{Prop}
For $f\in M_k(\Gamma_0^n(p))$
$$
\mathrm{Tr}(f)=\sum_{j=0}^n Y_j
$$
with
$$
Y_j:=p^{\frac{j(j+1)}{2}}(f\mid_k\omega_j)\mid \widetilde{U} _j(p).
$$
\end{Prop}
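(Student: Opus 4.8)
The plan is to unwind the definition of $\mathrm{Tr}$ and to regroup the cosets $\Gamma_0^n(p)\setminus\Gamma^n$ according to the Bruhat cell to which they belong. First I would note that reduction modulo $p$ induces a bijection
$$
\Gamma_0^n(p)\setminus\Gamma^n \;\xrightarrow{\ \sim\ }\; P(\mathbb F_p)\setminus Sp(n,\mathbb F_p),
$$
which follows from the surjectivity of $Sp(n,\mathbb Z)\to Sp(n,\mathbb F_p)$ together with the fact that $\Gamma_0^n(p)$ is precisely the preimage of the Siegel parabolic $P(\mathbb F_p)$. Feeding the Bruhat decomposition $Sp(n,\mathbb F_p)=\bigcup_{j=0}^n P(\mathbb F_p)\,\omega_j\,P(\mathbb F_p)$ into this bijection partitions a system of representatives for $\Gamma_0^n(p)\setminus\Gamma^n$ into $n+1$ blocks $R_0,\dots,R_n$, where $R_j$ consists of integral lifts of the classes $\omega_j\cdot n(B_j)\cdot m(A)$ with $B_j$ ranging over symmetric matrices in $M_j(\mathbb F_p)$ and $A$ over $P_{n,j}(\mathbb F_p)\setminus GL_n(\mathbb F_p)$, exactly as recorded in Section~\ref{Traces}. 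Consequently $\mathrm{Tr}(f)=\sum_{j=0}^n\sum_{\gamma\in R_j} f\mid_k\gamma$, and it suffices to identify the $j$-th inner sum with $Y_j$.

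Next I would evaluate the sum over $R_j$ for fixed $j$. Writing $f\mid_k\gamma=(f\mid_k\omega_j)\mid_k\bigl(n(B_j)m(A)\bigr)$ and inserting the Fourier expansion $f\mid_k\omega_j=\sum_T a_j(T;f)e^{2\pi i\,\mathrm{tr}(TZ)}$, the summation over the $p^{j(j+1)/2}$ symmetric matrices $B_j$ reduces to orthogonality of characters: $\sum_{B_j}e^{2\pi i\,\mathrm{tr}(TB_j)}$ vanishes unless the upper-left $j\times j$ block of $T$ is integral — which is exactly the condition that kills the denominators introduced by conjugating with $\omega_j$ — and equals $p^{j(j+1)/2}$ otherwise; this produces $\sum_{B_j}(f\mid_k\omega_j)\mid_k n(B_j)=p^{j(j+1)/2}\sum_{T\in\Lambda_n}a_j(T;f)e^{2\pi i\,\mathrm{tr}(TZ)}$. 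The subsequent action of the $m(A)$ transforms the Fourier support by $T\mapsto A^{-1}T\,{}^tA^{-1}$, so the coefficients become $b_j(T;f)=\sum_A a_j(A^{-1}T\,{}^tA^{-1};f)$, the sum over $P_{n,j}(\mathbb F_p)\setminus GL_n(\mathbb F_p)$ being well defined because $P_{n,j}$ respects the relevant block structure. Recording the two operations together as the operator $\widetilde{U}_j(p)=\widetilde{U}_j^0(p)\circ D_j(p)$ of \eqref{Up}, the $j$-th block contributes precisely $p^{j(j+1)/2}(f\mid_k\omega_j)\mid\widetilde{U}_j(p)=Y_j$; summing over $j$ gives the claim.

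The only genuinely substantive points — the rest being formal bookkeeping of formulas already assembled in Section~\ref{Traces} — are two, and I expect the second to be the delicate one. First, one must justify the identification $\Gamma_0^n(p)\setminus\Gamma^n\cong P(\mathbb F_p)\setminus Sp(n,\mathbb F_p)$ and check that replacing each Bruhat representative $\omega_j n(B_j)m(A)$ by an integral lift does not change its $\Gamma_0^n(p)$-coset, so that the combinatorics over $\mathbb F_p$ faithfully governs the archimedean sum. Second, in the exponential-sum step one must keep careful track of denominators: it has to be verified that $a_j(T;f)$ is supported on matrices $T$ whose entries outside the upper-left $j\times j$ block already lie in $\Lambda_n$, so that after the $B_j$-summation every surviving $T$ belongs to $\Lambda_n$ and the output is a genuine Fourier series in the sense of Section~\ref{Siegel}. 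Each of these deserves an explicit sentence, but neither presents real difficulty.
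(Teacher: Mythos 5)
Your proposal is correct and follows essentially the same route as the paper: Section~\ref{Traces} itself constitutes the proof, obtaining representatives from the Bruhat decomposition of $Sp(n,\mathbb F_p)$ via the Levi decomposition of $P$, killing the denominators in the upper-left $j\times j$ block by orthogonality of the exponential sum over the $B_j$, and packaging the $m(A)$-average into $\widetilde{U}_j(p)=\widetilde{U}_j^0(p)\circ D_j(p)$. The two points you flag for explicit justification (the bijection $\Gamma_0^n(p)\backslash\Gamma^n\cong P(\mathbb F_p)\backslash Sp(n,\mathbb F_p)$ via integral lifts, and the bookkeeping of which entries of $T$ may acquire denominators) are exactly the points the paper treats tacitly, so your write-up is, if anything, slightly more careful than the original.
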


\begin{Rem}
It should be clear that this expression for the trace has an analogue for the more general case of 
taking the trace from $\Gamma _0^n(NR)$ to $\Gamma _0^n(N)$ if $N$ and $R$ are coprime and $R$ 
is squarefree (see e.g.\cite{B-F-S}). 
\end{Rem}
\subsection{Congruences}
\label{Congruences}
For a prime number $p$ we denote by $\nu _p$ the normalized additive valuation on $\mathbb Q$ (i.e. $\nu_p(p)=1$). 
For a Siegel modular form $f\in M_k(\Gamma )(\mathbb Q)$ with Fourier expansion 
$f(Z)=\sum_Ta(T;f)e^{2\pi i\mathrm{tr}(TZ)}$ 
we define
$$\nu _p(f):=\mathrm{min}\left\{\nu _p(a(T;f))\mid T\geq 0\right\}.$$
Note that this minimum is well defined because the Fourier coefficients of 
$f$ have bounded denominators.
We also remark that $\nu_p(f)$ makes sense not only for 
modular forms with rational Fourier coefficients but also 
for the general case $f\in M_k(\Gamma)({\mathbb C})$ by tacitly 
extending the valuation to the field generated by all Fourier coefficients. 
For two modular forms $f$ and $g$ we define 
$$
f\equiv g \pmod{p}:\Longleftrightarrow \nu _p(f-g)\geq 1+\nu _p(f).
$$
We finally remark that in this setting $\nu _p(f\mid _k\gamma )$ 
also makes sense for arbitrary $\gamma \in \Gamma^n$. 
In particular, for $f\in M_k(\Gamma_0^n(p),\chi )$ we may consider $\nu _p(f\mid _k \omega _j);$ 
the Fourier expansions of 
$f\mid _k \omega_j$ may be 
viewed as `` expansion of $f$ in the cusp $\omega _j$ ''. 
(Strictly speaking, we should consider the 
double coset $\Gamma_0^n(p)\cdot \omega _j\cdot P(\mathbb Z)$ 
as a cusp for $\Gamma_0^n(p);$ by abuse of 
language we will call the $\omega _j$ `` the cusps for $\Gamma _0^n(p)$'' ). 
For basic results concerning fields generated by Fourier coefficients
and boundedness of denominators we refer to \cite{Shi}.
We will however use these notions in the sequel only for theta series, 
where such 
properties are accessible in a much more elementary way.

\subsection{Lattices and theta series}
\label{Lattices}
For an even integral positive definite matrix $S$ of size $m=2k$ we define the degree $n$ theta series in the usual way:
$$
\vartheta _S^n(Z):=\sum_{X\in \mathbb Z^{(m,n)}}e^{\pi i\mathrm{tr}(S[X]Z)} \ \ \ \ (Z\in \mathbb H_n).
$$
where $S[X]=\ ^tXSX$. We will freely switch between the languages of matrices $S$ and corresponding lattices $L$ 
and we write sometimes $\vartheta^n(L)$ instead of $\vartheta ^n_S$. For the transformation properties of 
such theta series see e.g. \cite{A}.
 Following \cite{B-N2} a lattice $L$ will be called $p$-special, if it has an isometry of order $p$ with no fixed point 
in $L\setminus \{0\}$. The theta series of such a lattice automatically satisfies
$$
\vartheta ^n(L)\equiv 1 \pmod{p}.
$$
There are many such lattices, see \cite{B-N1}:
\begin{Prop}
Let $p$ be an odd prime, then there are $p$-special (positive definite, even) lattices of rank $p-1$, 
level $p$ and determinant $p^t$ for all $1\leq t\leq p-2$. 
\end{Prop}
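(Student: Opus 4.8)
The plan is to construct these lattices explicitly as ideal lattices in the cyclotomic field $K=\mathbb{Q}(\zeta_p)$, whose ring of integers $\mathcal{O}_K=\mathbb{Z}[\zeta_p]$ has $\mathbb{Z}$-rank $p-1$. The multiplication-by-$\zeta_p$ map is an automorphism of $\mathcal{O}_K$ of order $p$, and since $\zeta_p$ has no eigenvalue equal to $1$ on $K$ (its minimal polynomial is the $p$-th cyclotomic polynomial, which does not vanish at $1$), this automorphism has no nonzero fixed point on any $\mathbb{Z}[\zeta_p]$-submodule. So any fractional ideal $\mathfrak{a}$ of $\mathcal{O}_K$, equipped with a suitable positive definite quadratic form invariant under $\zeta_p$, will automatically be $p$-special. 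The task is then to choose the ideal and the form so as to realize level exactly $p$ and determinant exactly $p^t$ for each $1\le t\le p-2$, and to arrange that the resulting lattice is even integral.

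First I would recall the trace form construction: for $x\in K$ one sets $q_\alpha(x) = \mathrm{Tr}_{K/\mathbb{Q}}(\alpha\, x\bar{x})$ for a totally positive $\alpha$ in the maximal real subfield $K^+=\mathbb{Q}(\zeta_p)^+$, where $x\mapsto\bar{x}$ is complex conjugation; this is positive definite and invariant under $x\mapsto\zeta_p x$ since $\zeta_p\bar\zeta_p=1$. For a fractional ideal $\mathfrak{a}$ the dual of $(\mathfrak{a},q_\alpha)$ with respect to the trace pairing is $(\alpha\,\mathfrak{d}\,\bar{\mathfrak{a}})^{-1} = \alpha^{-1}\mathfrak{d}^{-1}\bar{\mathfrak{a}}^{-1}$, where $\mathfrak{d}=(\lambda^{p-2})$ with $\lambda=1-\zeta_p$ is the different of $K/\mathbb{Q}$ (note $(p)=(\lambda)^{p-1}$). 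Taking $\mathfrak{a}=\mathfrak{p}^{s}=(\lambda)^{s}$ for an appropriate exponent $s$ and $\alpha=p^{a}\cdot(\text{unit})$ for an appropriate $a$, the determinant and level of the lattice become explicit powers of $p$ controlled by $s$ and $a$; since $(p)$ is totally ramified, the index $[\mathfrak{a}^\vee : \mathfrak{a}]$ works out to a power of $p$ whose exponent ranges over $\{1,\dots,p-2\}$ as the parameters vary, and the level (the smallest $N$ with $N\cdot(\text{Gram})^{-1}$ integral and even) comes out to be $p$ in the relevant range. Evenness is arranged by a final rescaling or by checking that $q_\alpha$ takes even values on $\mathfrak{a}$ for the chosen $\alpha$; if necessary one replaces $q_\alpha$ by $2q_\alpha$ and adjusts, or uses that $\mathrm{Tr}_{K/\mathbb{Q}}(\lambda^j\bar\lambda^j)$ is even for suitable $j$ since $p$ is odd.

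The main obstacle I expect is the simultaneous bookkeeping: one must hit determinant exactly $p^t$ \emph{and} level exactly $p$ \emph{and} evenness, all at once, and verify that the allowed range of $t$ is precisely $1\le t\le p-2$ — no more, no fewer. The lower bound $t\ge 1$ is forced because a unimodular even lattice of rank $p-1$ with a fixed-point-free order-$p$ isometry need not exist in general (and anyway level $p$ forces nontrivial determinant); the upper bound $t\le p-2$ reflects that the largest power of $p$ one can extract while keeping the level equal to $p$ (rather than $p^2$) is bounded by the ramification index minus one. Concretely, one should tabulate, for $\mathfrak{a}=(\lambda)^s$ and $\alpha$ with $\nu_{\mathfrak{p}}(\alpha)=v$, the quantities $\det \equiv p^{?}$ and level $=p^{?}$ as functions of $s$ and $v$ modulo $p-1$, and read off that every $t$ in $[1,p-2]$ is attained. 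A clean alternative, which I would mention as a remark, is to invoke the genus-theoretic or mass-formula existence results for lattices of given rank, level and determinant, but the cyclotomic construction has the advantage of making the $p$-special property manifest and the Fourier-coefficient integrality (used implicitly later for the theta series) completely transparent.
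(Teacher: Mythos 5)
The paper itself offers no proof of this proposition---it is quoted from \cite{B-N1}---but the construction there is precisely the one you propose: ideal lattices $({\mathfrak a},\,\mathrm{Tr}_{K/{\mathbb Q}}(\alpha x\bar y))$ in $K={\mathbb Q}(\zeta_p)$, with $\alpha$ totally positive in $K^+$. Your reduction of the $p$-special property to the fixed-point-freeness of multiplication by $\zeta_p$ is correct, and your formula for the dual lattice is right. The genuine gap is exactly the step you defer as ``the main obstacle'': the bookkeeping is not only unperformed, it cannot be performed as claimed, because there is a parity obstruction. For any fractional ideal ${\mathfrak a}$ and any totally positive $\alpha\in K^+$ one has $\det\bigl(\mathrm{Tr}(\alpha e_i\bar e_j)\bigr)=N_{K/{\mathbb Q}}(\alpha)\,N({\mathfrak a})^2\,|d_K|$ with $|d_K|=p^{p-2}$, and $N_{K/{\mathbb Q}}(\alpha)=N_{K^+/{\mathbb Q}}(\alpha)^2$ is a square because $\alpha$ lies in the index-two subfield $K^+$ (equivalently, $\nu_{\mathfrak p}(\alpha)$ is even, since ${\mathfrak p}$ is ramified over $K^+$). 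Hence the determinant is $p^{p-2}$ times a square of a power of $p$, and its exponent $t$ satisfies $t\equiv p-2\equiv 1\pmod 2$. Your parameters $s$ and $v$ therefore sweep out only the \emph{odd} $t$ in $[1,p-2]$, never the even ones; the assertion that ``every $t$ in $[1,p-2]$ is attained'' is false within your framework. (For the record, the clean choice is ${\mathfrak a}={\mathfrak p}^s=(1-\zeta_p)^s$ with $\alpha=1/p$ and $1\le s\le\frac{p-1}{2}$: this gives level $p$ and determinant $p^{2s-1}$, and evenness is automatic since $\mathrm{Tr}_{K/{\mathbb Q}}(x\bar x)\in 2p{\mathbb Z}$ for $x\in{\mathfrak p}$; your fallback of replacing $q_\alpha$ by $2q_\alpha$ is not available, as it would change the level.)

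Moreover the obstruction is not an artifact of your particular ansatz: a $p$-special lattice of rank $p-1$ is a torsion-free rank-one ${\mathbb Z}[\zeta_p]$-module, hence a fractional ideal, and every $\zeta_p$-invariant symmetric bilinear form on it is of the shape $\mathrm{Tr}(\alpha x\bar y)$ with $\alpha\in K^+$, so the parity constraint applies to \emph{every} lattice the proposition speaks about. The statement as printed (all $1\le t\le p-2$) thus appears to overstate \cite{B-N1}; only the odd exponents occur, and these are the only ones the present paper ever uses (the lattices $L_j$ of determinant $p^{2j+1}$ in Theorem~2$'$ of the Erratum). You should restrict your claim, and your verification, to $t$ odd---and then actually carry out the determinant/level computation, since it is the entire content of the proof.
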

Linear combinations of theta series for such lattices provide modular
forms with convenient congruence properties in {\it all} cusps,
see Theorem 2 and Theorem 2' in \cite{B-N1} (and also the erratum at the end
of this paper). 
\\[0.3cm] 
In the case $m=n$, we consider the series
$$
\vartheta_{S,\mathrm{det}}^{n}(Z):=\sum_{X\in \mathbb{Z}^{(n,n)}}\mathrm{det}X\cdot 
e^{\pi i\text{tr}(S[X]Z)},\ \ (Z\in \mathbb H_n).
$$
It is known that the series becomes a modular form of weight $1+\frac{n}{2}$ and vanishes identically 
if and only if there exists a matrix $U\in M_n(\mathbb Z)$ such that 
$$
S[U]=S,\ \ \ \ \ \ \ \ \mathrm{det}\,U=-1,
$$
(e.g. \ cf. \ \cite{F}).\\
As in \cite{Ma}, the theta series of this type allows us to construct  
cusp forms with accessible properties.

\subsection{Theta series in the cusps $\omega_j$} 
\label{ThetaSeries}

We need precise information about theta series in the 
cusps $\omega_j$, in particular
about the denominators, which occur in the Fourier expansions in the cusps.
To quote the results from \cite{BS,B-F-S},
it is more convenient to use the geometric notation here:
Let $(V,q)$ be a positive definite quadratic space over ${\mathbb Q}$
with attached bilinear form $B(x,y)=q(x+y)-q(x)-q(y)$ and 
let $L$ be an even lattice on $V$ (i.e. $q(L)\subseteq {\mathbb Z}$
of level $p$,  which means 
$q(L^{\sharp})\cdot {\mathbb Z}=\frac{1}{p} {\mathbb Z}$, where $L^{\sharp}$ 
denotes the dual of $L$.) After fixing a basis of $V$ we may identify
$V$ with ${\mathbb Q}^m$. The associated Gram matrix with respect to this basis
will be denoted by $S$; for ${\bf x}=(x_1,\dots, x_n)\in V^n$ we 
then define the 
$n\times n$ matrix $q({\bf x})$ by
$q({\bf x})_{ij}=\frac{1}{2} B(x_i,x_j)$. 
Let $P: {\mathbb C}^{(m,n)}\longrightarrow {\mathbb C}$
be a pluriharmonic polynomial with 
$P(XA)=\text{det}(A)^{\nu}P(X)$ for $A\in GL(n,{\mathbb C})$.
In this language, with
$$
\vartheta^n(P,L,Z):=\sum_{{\bf x}\in L^n} P({\bf x}) e^{2\pi i\text{tr}(q({\bf x})\cdot Z)}.
$$
and
$$
\vartheta^{(n-j,j)}(P,L,L^{\sharp},Z):=\sum_{{\bf x}\in L^{n-j}\times (L^{\sharp})^j}
P({\bf x})\, e^{ 2\pi i \text{tr}(q({\bf x})\cdot Z)}
$$
we get for all $j$
$$
\vartheta^n(P,L,Z)\mid_{\frac{m}{2}+\nu} \omega_j=
(\gamma_ps_p(V))^j \det(L)^{-\frac{j}{2}}\vartheta^{(n-j,j)}(P,L,L^{\sharp},Z).
$$ 
Here  $s_p(V)$ is the Hasse-Witt invariant at $p$ (as normalized in 
\cite{Sch}) and $\gamma_p$ depends only on 
$\det(L)\cdot ({\mathbb Q}_p^{\times})^2$, in particular it is one, if 
$\det(L)$ is a square.
There are obvious generalizations of this, if $L$ is of more general level.

\section{mod $\boldsymbol{p}$ kernel of theta operator}
\label{Kernel}
We are interested in constructing Siegel modular forms $F$ of level one, scalarvalued, such that
$F\not\equiv 0\bmod p$ and $\Theta^{[j]}(F)\equiv 0\bmod p$ for some $j>0$.	  
Thanks to the results on level changing in \cite{B-N3} it is sufficient to construct such modular forms for
groups $ \Gamma_0^n(p^N)$, possibly with nebentypus character $\chi_p$.
\subsection{Some generalities}
\label{generalities}
\begin{Rem} If $\Theta^{[r]}(F) \equiv 0 \pmod{p}$, then $\Theta^{[r']}(F)\equiv 0 \pmod{p}$ 
for all $r'\geq r$.
\end{Rem}
\begin{Rem} 
Suppose that 
$F=\sum a(T;F)e^{2\pi i\mathrm{tr}(TZ)}\in M_k(\Gamma^n)$ is mod $p$ singular of rank $l$, i.e.
$a(T;F)\equiv 0 \pmod{p}$ for all $T$ with $\text{rank}(T) >l$ and 
$a(T_0;F)\not\equiv 0 \pmod{p}$ for some $T_0$
of rank $l$, then $F$ automatically satisfies 
$\Theta^{[j]}(F)\equiv 0 \pmod{p}$ for all $j>l$.
Such modular forms were investigated in \cite{B-K}, they always satisfy 
$$
2k-l\equiv 0 \pmod{p-1}.
$$  
\end{Rem}
\subsection{Weak Constructions}
\label{Weak}
Let $S$ be an even integral positive definite matrix of size $n$, with $n$ even. We assume that
the rank of $S$ over the finite field $\mathbb{F}_p$ is $r<n$ and that the 
level of $S$ is a power of $p$, denoted by $p^l$. The determinant of $S$ can then be written as  $p^d$. 
Then 
$\vartheta^n_S\in M_{\frac{n}{2}}(\Gamma_0^n(p^l),\chi_p^d)$ satisfies
\begin{eqnarray*}
\vartheta_S^n & \not\equiv & 0 \pmod{p},\\ 
\Theta^{[j]}(\vartheta^n_S)  &  \equiv  & 0 \pmod{p} \qquad (j>r).
\end{eqnarray*}
If moreover the order of $\text{Aut}_{\mathbb Z}(S)$ is coprime to $p$, then
we have the stronger property
$$
\Theta^{[r]}(\vartheta_S^n)\not\equiv 0 \pmod{p},
$$
because the coefficient of this Fourier series at $S$ is just
$\sharp \text{Aut}_{\mathbb Z}(S) \cdot S^{[r]}$.\\
Now we assume in addition that $S$ has no automorphism of determinant $-1$ 
and that the order of $\text{Aut}_{\mathbb Z}(S)$ is coprime to $p$. Then 
$\vartheta_{S,\det}^n\in S_{\frac{n}{2}+1}(\Gamma^n_0(p^l),\chi_p^d)$
and it also satisfies the congruences above.
\\[0.3cm]
{\bf Conclusion:} {\it Under the assumption, that quadratic forms $S$ with the properties above exist, 
we get Siegel modular forms {\rm (}cusp forms respectively{\rm )} $F$ of level one and with weight
congruent to 
$\frac{n}{2}+d\cdot\frac{p-1}{2}$ 
$\pmod{(p-1)}$ {\rm (} $\frac{n}{2}+1+d\cdot \frac{p-1}{2}$ respectively{\rm ) }
satisfying the congruences above.}
\\
\\
Note that in general the actual weight of the level one form (using \cite{B-N3} or similar techniques)
will be much larger.
\begin{Rem}
The arithmetic of quadratic forms tells us, under which conditions on 
$(n,p,r,d)$ 
such forms $S$ exist or not.
We also note that an inspection of the Fourier expansion shows that a  
set of $h$ pairwise inequivalent quadratic forms $S_i$
with the properties above gives a set of $h$ pairwise linearly
independent modular forms mod $p$ in the kernel of $\Theta^{[n]}$,
provided that none of the $S_i$ has an integral automorphism of order $p$.
\end{Rem}
\begin{Rem}
There is a simpler construction, which covers more weights modulo $p-1$ and 
produces modular forms in the simultaneous kernel mod $p$
for {\it all} $\Theta$-operators: Suppose
that $f\in M_k(\Gamma_0^n(p^l))({\mathbb Z}_{(p)})$ satisfies
$\nu_p(f)=0$. Then $f(pz)$ is congruent mod $p$ to a modular 
form $F$
of level one which satisfies $\Theta^{[j]}(F)\equiv 0\bmod p$
for all $j\geq 1$. An analogous statement holds true if $f$ has nontrivial 
nebentypus $\chi_p$.
\end{Rem}

The remark above is quite usefull: 
We recall from \cite{F} that 
$$ 
\Theta^{[r]}(f\cdot g)= \Theta^{[r]}(f)\cdot g +\cdots \quad ,$$
where $\cdots$ is an integral linear combination of products of entries
of $\Theta^{[i]}f$ and of entries of $\Theta^{[j]}g$ ($i+j=r$).
Combining the conclusion above with the remark 3.3. and keeping in mind
that there always exist (integral, cuspidal) modular forms of level one and 
sufficiently large 
weight $k$, provided that $kn$ is even, we obtain elements in the kernel
of $\Theta^{[j]}$ for almost all weights, if $n$ is even:
\begin{Cor}
Under the same assumption on the existence of quadratic
form $S$ as above we obtain the existence of modular forms {\rm (}resp.\, cusp forms{\rm )}
$F$ of level one and weight congruent mod $p-1$ to
$$ \frac{n}{2}+d\cdot\frac{p-1}{2}+k\qquad
\left(resp.\; \frac{n}{2}+1+d\cdot\frac{p-1}{2}+k\right), $$
which satisfy the congruence above. Here $k$ is now arbitrary mod $p-1$.
\end{Cor}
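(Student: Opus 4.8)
The plan is to combine the "weak construction" Conclusion (which produces a level one form of weight $\equiv \frac{n}{2}+d\cdot\frac{p-1}{2}$ or $\frac{n}{2}+1+d\cdot\frac{p-1}{2}$ lying in the kernel of $\Theta^{[j]}$ for $j>r$) with the product formula for $\Theta^{[r]}$ quoted from \cite{F}, using Remark 3.3 to absorb an arbitrary shift of the weight modulo $p-1$. Concretely, let $F_0$ be the level one form (cusp form in the cuspidal case) supplied by the Conclusion, so $F_0\not\equiv 0\pmod p$ and $\Theta^{[j]}(F_0)\equiv 0\pmod p$ for all $j>r$; write $k_0$ for its weight. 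First I would pick, for a given target class $k\bmod (p-1)$, an integer $k$ that is large enough and has the correct parity so that there exists an integral cusp form $g\in M_k(\Gamma^n)(\mathbb{Z}_{(p)})$ of level one with $\nu_p(g)=0$ — such forms exist for all sufficiently large $k$ with $kn$ even. Then, by Remark 3.3 applied to $f=g$, the shifted form $G(Z):=g(pZ)$ is congruent mod $p$ to a level one form which lies in the simultaneous kernel mod $p$ of \emph{all} $\Theta^{[i]}$; in particular $\Theta^{[i]}(G)\equiv 0\pmod p$ for every $i\geq 0$, including $i=0$, i.e. $G\equiv 0\pmod p$? No — rather $\Theta^{[i]}(G)\equiv 0$ for $i\geq 1$ and $G\not\equiv 0$. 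I would set $F:=F_0\cdot G$ (or, in the weak non-cuspidal case, $F:=F_0\cdot g'$ with $g'$ an integral level one form of the right weight), whose weight is $k_0+k$, hence in the desired class modulo $p-1$.

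Next I would check the congruence $\Theta^{[r']}(F)\equiv 0\pmod p$ for all $r'>r-1$, i.e. for $r'\geq r$ — wait, the target congruence is the one displayed just above the Corollary, namely $\Theta^{[j]}(F)\equiv 0$ with $j>r$ for the weak statement, sharpened to include $r'=r$ when the relevant automorphism hypotheses hold; I would phrase the proof to give exactly the same congruences as in the Conclusion. Applying the Leibniz-type formula
\[
\Theta^{[r']}(F_0\cdot G)=\Theta^{[r']}(F_0)\cdot G+\sum_{i+i'=r',\ i<r'}(\text{entries of }\Theta^{[i]}F_0)\cdot(\text{entries of }\Theta^{[i']}G),
\]
the first term vanishes mod $p$ because $r'>r$ (or $\geq r$ under the extra hypotheses), and every remaining term contains a factor $\Theta^{[i']}G$ with $i'\geq 1$, which is $\equiv 0\pmod p$ since $G$ is in the simultaneous kernel. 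Hence $\Theta^{[r']}(F)\equiv 0\pmod p$ for the same range of $r'$ as for $F_0$. Finally, $F\not\equiv 0\pmod p$: since $\nu_p$ is additive on products of forms with algebraic Fourier coefficients and $\nu_p(F_0)=\nu_p(G)=0$, we get $\nu_p(F)=0$. In the cuspidal case, $F$ is a cusp form because $F_0$ (or $G$) is. Collecting the weight: $F$ has weight $k_0+k$ with $k_0\equiv\frac{n}{2}+d\cdot\frac{p-1}{2}$ (resp.\ $\frac{n}{2}+1+d\cdot\frac{p-1}{2}$) and $k$ arbitrary modulo $p-1$, which is exactly the assertion.

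The main obstacle — really the only delicate point — is ensuring that the auxiliary level one form $g$ with $\nu_p(g)=0$ can be chosen in \emph{every} residue class $k\bmod(p-1)$ with the parity constraint $kn$ even, rather than just for some weights; this is handled by taking $k$ large (so that $S_k(\Gamma^n)\neq 0$, or $M_k(\Gamma^n)\neq 0$ in the non-cuspidal case) and then noting that an integral basis of such a space cannot be entirely divisible by $p$, so a $\mathbb{Z}_{(p)}$-integral form of $p$-adic valuation $0$ always exists. Once that is in place, everything else is the bookkeeping above: the Leibniz formula from \cite{F}, the additivity of $\nu_p$, and Remark 3.3. I would also remark that when $n$ is odd one restricts $k$ to even values, which is why the statement is clean only for $n$ even, matching the hypothesis carried over from the Conclusion.
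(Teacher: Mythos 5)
Your proposal follows exactly the paper's (very terse) argument: the corollary is obtained by combining the Conclusion with the remark on $f(pZ)$ and the Leibniz-type formula for $\Theta^{[r]}$ from \cite{F}, i.e.\ by multiplying the theta-based level one form by a level one form congruent to $g(pZ)$, which lies in the simultaneous mod $p$ kernel of all $\Theta^{[i]}$, $i\geq 1$; your additional bookkeeping ($\nu_p$ additive on products, existence of an integral $g$ with $\nu_p(g)=0$ in every large weight with $kn$ even) correctly fills in what the paper leaves implicit. One caution: your parenthetical alternative $F:=F_0\cdot g'$ with $g'$ a plain level one form does not work, because the Leibniz cross-terms $\Theta^{[i]}(F_0)\cdot\Theta^{[i']}(g')$ with $i\leq r$ and $i'\geq 1$ need not vanish mod $p$; the $g(pZ)$-twist is needed in the non-cuspidal case just as in the cuspidal one.
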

\subsection{A variant, in particular for odd degree 
}

For $n$ arbitrary we choose an even integer $m$ with $m>n$
and a positive definite even integral quadratic form of size $m$
with $\text{rank}_{{\mathbb F}_p}(S)=m'<n$ and level($S$) a power of $p$.
Then $\vartheta^n_S$ is in the kernel of $ \Theta^{[n]}\bmod p$.
This construction gives weaker results than before, because
a lot of linear dependencies among the  theta series may arise here
(over ${\mathbb Q}$ or over ${\mathbb F}_p$).\footnote{This case appeared in a
discussion with S.Takemori.}
\section{Strong Constructions}
\begin{Thm}
\label{Theorem1}
For $n\equiv 0 \bmod 4$ and all primes $p\geq n+3$ there exists 
$F\in M_{\frac{n}{2}+p-1}(\Gamma^n)$, $F\not\equiv 0 \pmod{p}$
such that $ \Theta^{[n-1]}(F)\equiv \Theta^{[n]}(F)\equiv 0 \pmod{p}.$
If the prime $p$ satisfies  $p\equiv 1 \pmod{4}$, the condition $p\geq n$ 
is sufficient.
\end{Thm}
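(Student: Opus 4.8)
The plan is to exhibit $F$ as the trace to level one of a degree $n$ theta series of weight $\frac{n}{2}+p-1$ whose Fourier coefficients are, modulo $p$, supported on matrices of small $\mathbb{F}_p$-rank. First I observe that by the first Remark in Section~\ref{generalities} it suffices to arrange $\Theta^{[n-1]}(F)\equiv 0\pmod p$, since this already forces $\Theta^{[n]}(F)\equiv 0\pmod p$. Writing $\Theta^{[n-1]}(F)=\sum_T T^{[n-1]}a(T;F)e^{2\pi i\,\mathrm{tr}(TZ)}$ and noting that every $(n-1)$-rowed minor of $T$ vanishes mod $p$ as soon as $\mathrm{rank}_{\mathbb{F}_p}(T)\le n-2$, the target congruence follows once $F$ is congruent mod $p$ to a form all of whose nonzero Fourier coefficients sit at matrices $T$ with $\mathrm{rank}_{\mathbb{F}_p}(T)\le n-2$.

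Next I choose the building blocks. By the arithmetic of quadratic forms I would pick an even positive definite lattice $L_0$ of rank $n$, level a power of $p$, with $\mathrm{rank}_{\mathbb{F}_p}(S_0)=n-2$ and square determinant (so the nebentypus $\chi_p^{d}$ is trivial), $S_0$ being a Gram matrix of $L_0$. Since $2\,q(\mathbf{x})\equiv S_0[X]\pmod p$ has $\mathbb{F}_p$-rank at most $n-2$ for every $\mathbf{x}$, the theta series $\vartheta^n_{S_0}$ already lies in the kernel of $\Theta^{[n-1]}$ mod $p$ and is nonzero mod $p$. To raise the weight from $\frac n2$ to $\frac n2+p-1$ without disturbing either property, I take the orthogonal sum with a $p$-special lattice $L'$ of rank $2(p-1)$ (an orthogonal sum of two lattices from the Proposition of Section~\ref{Lattices}), whose theta series has weight $p-1$ and satisfies $\vartheta^n(L')\equiv 1\pmod p$. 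Setting $L:=L_0\perp L'$ and $F_0:=\vartheta^n_L=\vartheta^n_{S_0}\cdot\vartheta^n(L')$, we obtain a form of weight $\frac n2+p-1$ on $\Gamma_0^n(p^l)$ with $F_0\equiv\vartheta^n_{S_0}\pmod p$, hence nonzero mod $p$ and in the kernel of $\Theta^{[n-1]}$; arranging $\det L$ to be a square keeps the character trivial.

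The descent to level one is carried out by the trace of Section~\ref{Traces} (and its iteration to $\Gamma_0^n(p^l)$): I set $F:=\mathrm{Tr}(F_0)=\sum_{j=0}^n Y_j$ with $Y_j=p^{j(j+1)/2}(F_0\mid\omega_j)\mid\widetilde U_j(p)$, which lies in $M_{\frac n2+p-1}(\Gamma^n)$. The term $Y_0$ equals $F_0$ and contributes $\vartheta^n_{S_0}$ mod $p$; the whole point is to show the remaining $Y_j$ ($1\le j\le n$) are harmless. For this I would feed the cusp formula of Section~\ref{ThetaSeries}, namely $\vartheta^n(L)\mid\omega_j=(\gamma_p s_p(V))^j\det(L)^{-j/2}\vartheta^{(n-j,j)}(L,L^\sharp)$, into $Y_j$ and estimate $p$-valuations: the pole $\det(L)^{-j/2}=p^{-jD/2}$ (with $\det L=p^{D}$) must be beaten by the factor $p^{j(j+1)/2}$ together with the $p$-divisibility of $\vartheta^{(n-j,j)}(L,L^\sharp)$ coming from the fixed-point-free order-$p$ isometry of $L'$, which acts freely on the $L'$- and $L'^\sharp$-components of every nonzero configuration and so forces those contributions to vanish mod $p$. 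After this reduction each $Y_j$ is, modulo $p$, built only from $L_0$- and $L_0^\sharp$-vectors, so its Fourier support again lies at matrices of $\mathbb{F}_p$-rank $\le n-2$; thus $F$ stays in the kernel of $\Theta^{[n-1]}$, while $Y_0=F_0\not\equiv 0$ guarantees $F\not\equiv 0\pmod p$.

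The hard part is exactly this valuation bookkeeping in the trace: showing that for every $j$ the combination $p^{j(j+1)/2}\det(L)^{-j/2}\,\vartheta^{(n-j,j)}(L,L^\sharp)\mid\widetilde U_j(p)$ is $p$-integral and, moreover, mod $p$ supported on low $\mathbb{F}_p$-rank $T$. This is where the numerical hypotheses enter. The congruence $n\equiv 0\pmod 4$ and the bounds $p\ge n+3$ (respectively $p\ge n$ when $p\equiv 1\pmod 4$, where the Hasse--Witt invariant $s_p(V)$ simplifies because $\left(\tfrac{-1}{p}\right)=1$) are dictated by the existence, via Hasse--Minkowski, of a positive definite even lattice of rank $n$ with the prescribed local data at $p$ — unimodular of corank $2$ — together with the divisibility needed to control all the $Y_j$ simultaneously. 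Granting these inputs, $F=\mathrm{Tr}(F_0)$ has the asserted weight $\frac n2+p-1$ and the required congruences.
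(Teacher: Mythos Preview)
Your starting point coincides with the paper's: pick an even positive definite $S$ of rank $n$ with $\mathrm{rank}_{\mathbb F_p}(S)=n-2$, so that $\vartheta^n_S$ already sits in the kernel of $\Theta^{[n-1]}$ mod $p$. From there, however, the paper proceeds quite differently. It takes $S$ specifically of \emph{level $p$} and $\det(S)=p^2$ (constructed for $n=4$ from a maximal order in the quaternion algebra ramified at $p$, and for $n=8$ by scaling one hyperbolic plane of $E_8$), computes the exact cusp valuations $\nu_p(\vartheta^n_S\mid_{n/2}\omega_j)=-j$, and then invokes Theorem~4 of \cite{B-N1} as a black box to produce $F\in M_{\frac n2+p-1}(\Gamma^n)$ with $F\equiv\vartheta^n_S\pmod p$. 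The numerical hypothesis $p\ge n+3$ (respectively $p\ge n$ when $p\equiv 1\bmod 4$) is the hypothesis of that cited theorem, not a constraint from the existence of $S$ as you suggest.

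Your attempt to carry out the level descent by hand has a real gap. First, the assertion that for $j\ge 1$ the piece $Y_j$ is, mod $p$, supported on Fourier indices of $\mathbb F_p$-rank $\le n-2$ is not justified: once dual-lattice columns $L_0^\sharp$ enter, the Gram matrix $q(\mathbf x)$ involves blocks like ${}^tX''S_0^{-1}X''$ and ${}^tX'X''$, and there is no reason these have small rank over $\mathbb F_p$ after restricting to $q(\mathbf x)\in\Lambda_n$. Second, even if every $Y_j$ were supported on low-rank indices, this would only put $F$ in the kernel of $\Theta^{[n-1]}$; it would \emph{not} give $F\not\equiv 0\pmod p$, since nothing prevents the $Y_j$ with $j\ge 1$ from cancelling $Y_0$. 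What one actually needs---and what the paper obtains via the external theorem---is the stronger statement $Y_j\equiv 0\pmod p$ for all $j\ge 1$, so that $F\equiv\vartheta^n_S$ on the nose. That in turn is exactly a valuation computation of the type you flag as ``the hard part'', and it hinges on the precise input $\nu_p(\vartheta^n_S\mid\omega_j)=-j$ coming from $\det(S)=p^2$ and level $p$; your more loosely specified $L_0$ of level $p^l$ and an unspecified-determinant $p$-special $L'$ do not pin down these valuations.
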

\begin{Thm}
\label{Theorem2}
For $n\equiv 2 \pmod{4}$ {\rm (}if $p\equiv 3 \pmod{4}${\rm )} or $4\mid n$ 
{\rm (}if $p\equiv 1 \pmod{4}$ {\rm )}
and all primes 
$p\geq 2n+3$ there exists $F\in M_{\frac{n}{2}+\frac{p-1}{2}}(\Gamma^n)$, $F\not\equiv 0 \pmod{p}$
such that $ \Theta^{[n]}(F)\equiv 0 \pmod{p}$.
\end{Thm}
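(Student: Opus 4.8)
The plan is to realize $F$ as the level-one trace of a slightly adjusted degree-$n$ theta series. The first input is a positive definite even integral quadratic form $S$ of size $n$, level $p$, with $\det S=p$. Locally at $p$ such an $S$ has the form $U\perp\langle pu\rangle$ with $U$ unimodular of size $n-1$ and $u\in\mathbb Z_p^\times$, so its genus is determined by $u$ modulo squares together with the signature; Milgram's formula for the discriminant form $x\mapsto\frac up x^2$ on $\mathbb Z/p$ forces $\mathrm{sign}(S)=n$ into a prescribed class mod $8$, and letting $u$ run over a square and a non-square one checks that a suitable genus exists precisely in the two stated cases. Since $p^2\nmid\det S$, automatically $\mathrm{rank}_{\mathbb F_p}(S)=n-1<n$, so the discussion of Section~\ref{Weak} gives $\vartheta^n_S\in M_{n/2}(\Gamma_0^n(p),\chi_p)$ with $\vartheta^n_S\not\equiv0\pmod p$ and $\Theta^{[n]}(\vartheta^n_S)\equiv0\pmod p$; the last point is also clear directly, since a Fourier index $T$ of $\vartheta^n_S$ satisfies $\det(2T)=p\,(\det X)^2$, whence $\nu_p(\det T)\ge1$.

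To descend to level one without disturbing this, I would first neutralise the character $\chi_p$. By the Proposition of Section~\ref{Lattices} there are $p$-special lattices of rank $p-1$, level $p$ and determinant $p$; following Theorems~2 and~$2'$ of \cite{B-N1} I would take for $E$ a suitable linear combination of their theta series, so that $E\in M_{(p-1)/2}(\Gamma_0^n(p),\chi_p)$ satisfies $E\equiv1\pmod p$ and, more importantly, has controlled congruence behaviour in \emph{every} cusp $\omega_j$, $0\le j\le n$ --- this is where the hypothesis $p\ge 2n+3$, equivalently $p-1>2n$, is needed. Then $G:=\vartheta^n_S\cdot E\in M_{\frac n2+\frac{p-1}2}(\Gamma_0^n(p))$ has trivial character because $\chi_p^2=1$, it satisfies $G\equiv\vartheta^n_S\pmod p$, and I set $F:=\mathrm{Tr}(G)\in M_{\frac n2+\frac{p-1}2}(\Gamma^n)$.

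It then remains to prove $F\equiv\vartheta^n_S\pmod p$; granting this, $F\not\equiv0$ and $\Theta^{[n]}(F)\equiv\Theta^{[n]}(\vartheta^n_S)\equiv0\pmod p$, which finishes the proof. I would write $\mathrm{Tr}(G)=\sum_{j=0}^n Y_j$ as in Section~\ref{Traces}; here $Y_0=G\equiv\vartheta^n_S$, so it suffices to show $Y_j\equiv0\pmod p$ for $1\le j\le n$, where $Y_j=p^{j(j+1)/2}(G\mid\omega_j)\mid\widetilde U_j(p)$. By Section~\ref{ThetaSeries} the cusp expansion factors, $G\mid\omega_j=(\vartheta^n_S\mid\omega_j)(E\mid\omega_j)$, each factor being a root of unity times $\det(L)^{-j/2}$ times a theta series $\vartheta^{(n-j,j)}(L,L^\sharp,\cdot)$ with non-negative integral Fourier coefficients; as $\det L=p$ in both cases, $\nu_p(G\mid\omega_j)\ge-j$, and since $\widetilde U_j(p)$ does not decrease $\nu_p$ we get $\nu_p(Y_j)\ge\frac{j(j+1)}2-j=\frac{j(j-1)}2$, which is $\ge1$ for $j\ge2$. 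The delicate case, and the real obstacle, is $j=1$: here the crude estimate only gives $\nu_p(Y_1)\ge0$, and the missing power of $p$ must be extracted by splitting $\widetilde U_1(p)=\widetilde U^0_1(p)\circ D_1(p)$ and exploiting the congruence of $E$ in the cusp $\omega_1$ --- concretely, that the denominator-carrying part of $E\mid\omega_1$ is $\equiv0\pmod p$, so that after applying $D_1(p)$ the polar part of $G\mid\omega_1$ is annihilated by the orthogonality of exponential sums. Establishing this cancellation is the crux; once it is in hand, $F\in M_{\frac n2+\frac{p-1}2}(\Gamma^n)$ has all the required properties.
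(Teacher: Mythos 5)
Your overall strategy coincides with the paper's: produce $S$ even, positive definite, of level and determinant $p$ (your Milgram/discriminant-form argument is a legitimate substitute for the paper's case-by-case constructions and its reference to Waldspurger), multiply $\vartheta^n_S$ by a weight-$\frac{p-1}{2}$ form $\mathcal{E}\in M_{\frac{p-1}{2}}(\Gamma_0^n(p),\chi_p)$ with $\mathcal{E}\equiv 1\pmod p$ to kill the character, take the trace, and show $Y_j\equiv 0\pmod p$ for $j\geq 1$. But there is a genuine gap at $j=1$, which you flag yourself as ``the crux'' and do not close. The paper never needs any cancellation at $j=1$, because it imposes on $\mathcal{E}$ the \emph{stronger} cusp condition $\nu_p(\mathcal{E}\mid\omega_j)\geq -\frac{j^2}{2}+1$ for $j\geq 1$ (so in particular $\nu_p(\mathcal{E}\mid\omega_1)\geq \frac12>0$); combined with $\nu_p(\vartheta^n_S\mid\omega_j)=-\frac{j}{2}$ this gives
$$
\nu_p(Y_j)\;\geq\;\frac{j(j+1)}{2}-\frac{j}{2}-\frac{j^2}{2}+1\;=\;1
$$
uniformly for all $j\geq 1$. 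Such an $\mathcal{E}$ is exactly what Theorem 2$'$ of the erratum provides: $\mathcal{E}=\sum_{j=0}^n(-1)^jp^{\frac{j^2+j}{2}}\vartheta^n(L_j)$ with $L_j$ $p$-special of rank $p-1$, level $p$ and determinant $p^{2j+1}$, the constant terms in each cusp cancelling in consecutive pairs; the hypothesis $p\geq 2n+3$ enters precisely to guarantee that these lattices exist ($2n+1\leq p-2$). Your computation instead assigns $\mathcal{E}$ the cusp behaviour of a single determinant-$p$ lattice, $\nu_p(\mathcal{E}\mid\omega_j)\geq-\frac{j}{2}$, which is inconsistent with ``following Theorem 2$'$'' and only yields $\nu_p(Y_1)\geq 0$.

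Worse, the rescue you sketch for $j=1$ points in the wrong direction. Since $S$ has squarefree determinant it is maximal, so by the maximality property quoted in Section 5 the operator $\widetilde{U}^0_1(p)$ does not annihilate the polar part of $\vartheta^{(n-1,1)}(S,S^{\sharp})$ --- it reproduces $\vartheta^n_S$ itself (this reproduction is exactly the mechanism the paper exploits in its Section 5 computation, where the analogous $Y_1$ and $Y_2$ are the \emph{non}vanishing main terms). Hence with a determinant-$p$ choice of $\mathcal{E}$ one gets $Y_1\equiv(\text{unit})\cdot\vartheta^n_S\pmod p$, not $0$. One could conceivably still conclude $F\not\equiv 0\pmod p$ by computing that unit and checking $1+(\text{unit})\not\equiv 0$, but that is neither what you assert nor necessary once the correct $\mathcal{E}$ is used. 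To repair the proof, replace your $E$ by the $\mathcal{E}$ of Theorem 2$'$ and rerun your valuation estimate with the bound $-\frac{j^2}{2}+1$; everything then closes without any delicate cancellation.
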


\begin{Rem} A similar result, using Eisenstein series was shown by the 
third author \cite{Nag}.
Our method of proof provides examples beyond Eisenstein series.
\end{Rem} 
The proofs of these theorems 
are quite similar, they rely on results from \cite{B-N1}; we mainly have to
assure the existence of appropriate even integral quadratic forms of rank $n$.
In all cases, we have just to consider $n\bmod 8$, because we may add even
unimodular quadratic forms as orthogonal summands if necessary.  
\\
To prove Theorem \ref{Theorem1}, we first observe, that there exist even integral 
quadratic forms $S$
in dimension $n\equiv 0 \bmod 4$ with $\text{det}(S)=p^2$, level $p$ and
with rank $n-2$ over ${\mathbb F}_p$: 
For $n=4$ we may choose a lattice corresponding to a maximal order in the
definite rational quaternion algebra ramified only at $p$. For $n=8$ 
we recall that locally at $p$ the even unimodular lattice $E_8$ corresponds to
an orthogonal sum of hyperbolic planes. The requested quadratic form is then 
shown to exist by scaling one of the hyperbolic planes by the factor $p$.
 The theta series attached to such $S$
satisfies
$$
\nu_p(\vartheta^n_S\mid_{\frac{n}{2}} \omega_j)= -j
$$  
for all $j$. Theorem 4 from \cite{B-N1} 
asserts the existence of $F\in M_{\frac{n}{2}+p-1}(\Gamma^n)$
with $\vartheta^n_S\equiv F \pmod{p}$.

Concerning the second theorem, even integral quadratic forms $S$ of level 
$p$ and determinant $p$
are known to exist in the dimensions mentioned in the theorem:
For $p\equiv 3\bmod 4$ and $n=2$ we may chose the binary form $(x,y)\mapsto
x^2+xy+\frac{p+1}{4}y^2$. For $p\equiv 3\bmod 4$ and $n=6$ we consider
the orthogonal sum of a binary form of determinant $p$ and a 
quaternary form of determinant $p^2$. A maximal overlattice then 
has the requested property (see the next section for the notion of maximality).
For $p\equiv 1 \bmod 4$ and $n=4$ the existence is shown e.g. in 
\cite[p.350]{Pet}. For $n=8$ we may take the orthogonal sum of quaternary 
forms of determinant $p$ and $p^2$ and go to a 
maximal overlattice. For a local-global
proof of existence in the case $p\equiv 1 \bmod 4$ see \cite[lemme 19]{Wald}.\\
We may then construct $F$ from $\vartheta^n_S$ by
$$
F:= \text{Tr}(\vartheta^n_S\cdot {\mathcal E}),
$$
provided that we can find ${\mathcal E}\in M_{\frac{p-1}{2}}(\Gamma^n_0(p),\chi_p)({\mathbb Z})$ 
with ${\mathcal E}\equiv 1 \pmod{p}$ and
$$
\nu_p({\mathcal E}\mid\omega_j)\geq -\frac{j^2}{2}+1\qquad (j\geq 1).
$$
Then all the $Y_j$ in the trace above are congruent zero mod $p$ except for
$Y_0$. The existence of such $\mathcal E$ is asserted by (the corrected
version of) Theorem 2, \cite{B-N1} (see, Erratum, Theorem 2').

\begin{Rem} Unfortunately, the procedure above does not work
for the theta series $\vartheta_{S,\det}^n$; it breaks down 
because we get an additional factor $p$ in the denominators of the $Y_j$.
This is the main reason
for a different approach in the next section .
\end{Rem}

\section{Strong construction with harmonic polynomial}

\subsection{Motivation}
\label{Motivation}

We want to explain the numerical examples for degree 2 (see the last section)
by a refined version of our construction, in particular, we aim at  

\begin{Prop} 
\label{binary}
Let $p$ be a prime congruent 3 mod 4,
let $S$ be a binary quadratic 
form of discriminant $-p$ {\rm (}without improper automorphism {\rm )}.
Then there exists $F\in S_{2+3\cdot\frac{p-1}{2}}(\Gamma^2)$ such that
$$
F\equiv \vartheta^2_{S,\det} \pmod{p}.
$$
\end{Prop}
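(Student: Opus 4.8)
The plan is to realize $F$ as the trace (from $\Gamma_0^2(p)$ to $\Gamma^2$) of the product of $\vartheta^2_{S,\det}$ with a suitable auxiliary form $\mathcal E$ of weight $\frac{p-1}{2}$ and nebentypus $\chi_p$, exactly in the spirit of the proof of Theorem \ref{Theorem2}, but arranged so as to absorb the extra factor of $p$ in the denominators noted in the Remark following that theorem. Concretely, I would set
\begin{equation*}
F := \mathrm{Tr}\left(\vartheta^2_{S,\det}\cdot \mathcal E\right)
\end{equation*}
and aim to show: (i) $\vartheta^2_{S,\det}\in S_{2}(\Gamma_0^2(p),\chi_p)$, which follows from Section \ref{Lattices} once we know $S$ has discriminant $-p$ and no improper automorphism (so the series does not vanish and is a cusp form of weight $1+\tfrac{n}{2}=2$ with the stated character, since $\det S = p$ gives the character $\chi_p^{d}$ with $d=1$); (ii) the product $\vartheta^2_{S,\det}\cdot\mathcal E$ lies in $S_{2+\frac{p-1}{2}}(\Gamma_0^2(p),\chi_p^2)=S_{2+\frac{p-1}{2}}(\Gamma_0^2(p))$; (iii) after taking the trace we land in $S_{2+\frac{p-1}{2}+\,?}$... — and here is where the weight $2+3\cdot\frac{p-1}{2}$ in the statement must come from. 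So in fact the correct shape is presumably $F = \mathrm{Tr}(\vartheta^2_{S,\det}\cdot\mathcal E)$ with $\mathcal E\in M_{3\cdot\frac{p-1}{2}}(\Gamma_0^2(p),\chi_p)$, $\mathcal E\equiv 1\pmod p$, chosen with enough vanishing in the cusps $\omega_j$ to kill all $Y_j$ for $j\geq 1$ despite the extra $p$ in the denominator of the Fourier expansion of $\vartheta^2_{S,\det}$ at those cusps.

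**First** I would compute $\nu_p\big(\vartheta^2_{S,\det}\mid_2\omega_j\big)$ for $j=1,2$ using the transformation formula of Section \ref{ThetaSeries}: the determinant-weighted theta series at the cusp $\omega_j$ becomes (up to a root-of-unity times $\det(L)^{-j/2}$ times a Hasse invariant factor) a mixed theta series $\vartheta^{(n-j,j)}$ over $L^{n-j}\times(L^\sharp)^j$, and the presence of $L^\sharp$ — together with the extra $\det X$ factor — is exactly what forces the stronger denominator $-\tfrac{j(j+1)}{2}-?$ rather than $-\tfrac{j^2}{2}$. I would pin down that valuation precisely for the binary case ($n=2$, $\det S = p$), obtaining $\nu_p(\vartheta^2_{S,\det}\mid_2\omega_1)$ and $\nu_p(\vartheta^2_{S,\det}\mid_2\omega_2)$ explicitly. **Then** the requirement on $\mathcal E$ is the inequality
\begin{equation*}
\nu_p(\mathcal E\mid\omega_j)\;\geq\; -\,\nu_p\big(\vartheta^2_{S,\det}\mid_2\omega_j\big)\;-\;\frac{j(j+1)}{2}\;+\;1\qquad(j=1,2),
\end{equation*}
so that each $Y_j = p^{j(j+1)/2}(\vartheta^2_{S,\det}\cdot\mathcal E)\mid_{*}\omega_j\mid\widetilde U_j(p)$ has $\nu_p\geq 1$, i.e.\ $Y_j\equiv 0\pmod p$ for $j\geq 1$, while $Y_0\equiv\vartheta^2_{S,\det}\cdot\mathcal E\equiv\vartheta^2_{S,\det}\pmod p$ (using $\mathcal E\equiv 1$ and $\widetilde U_0(p)=\mathrm{id}$). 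The existence of $\mathcal E$ of weight $3\cdot\frac{p-1}{2}$, with $\mathcal E\equiv 1\pmod p$ and the prescribed lower bounds for $\nu_p(\mathcal E\mid\omega_j)$, I would get from the corrected Theorem 2 / Theorem 2' of \cite{B-N1} — which is designed to produce exactly such forms with controlled behaviour in every cusp — possibly after taking a product of such a form with itself or with a power of the basic $p$-special theta series $\vartheta^2(L)\equiv 1$.

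**Finally**, I would check that $\mathrm{Tr}$ maps $S_{2+3(p-1)/2}(\Gamma_0^2(p))$ into $S_{2+3(p-1)/2}(\Gamma^2)$ (the trace of a cusp form is a cusp form, and $2+3\cdot\frac{p-1}{2}$ is an integer since $p$ is odd), so $F\in S_{2+3(p-1)/2}(\Gamma^2)$, and assemble $F\equiv Y_0\equiv\vartheta^2_{S,\det}\pmod p$ from the Proposition on the trace decomposition ($\mathrm{Tr}(f)=\sum_{j=0}^2 Y_j$). **The main obstacle** I expect is Step (i)–(ii) bookkeeping of valuations in the cusps — precisely nailing $\nu_p(\vartheta^2_{S,\det}\mid_2\omega_j)$, since the extra $\det X$ twist changes the denominator estimate relative to the untwisted case (this is the very breakdown flagged in Remark after Theorem \ref{Theorem2}), and then matching that against exactly what \cite{B-N1} can deliver for $\mathcal E$; getting the weight to come out as the stated $3\cdot\frac{p-1}{2}$ rather than $\frac{p-1}{2}$ is the quantitative heart of the argument. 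Everything else — cusp-form-ness, the character becoming trivial after squaring $\chi_p$, the triviality of $\widetilde U_0(p)$ — is routine given the preliminaries.
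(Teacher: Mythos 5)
There is a genuine gap, and it sits exactly where you located ``the quantitative heart of the argument'': the auxiliary form $\mathcal E$ you need does not exist, or at least cannot be produced by the tools you invoke. Your mechanism is to kill every $Y_j$ with $j\geq 1$ by cusp vanishing of $\mathcal E$, so that $F\equiv Y_0\equiv \vartheta^2_{S,\det}$. Run the numbers: with $\det(2S)=p$ the transformation formula of Section \ref{ThetaSeries} gives $\nu_p\bigl(\vartheta^2_{S,\det}\mid_2\omega_j\bigr)\geq -\tfrac{j}{2}-1$ (the extra $-1$ from the harmonic polynomial $\det$), so $\nu_p(Y_j)\geq \tfrac{j(j+1)}{2}-\tfrac{j}{2}-1+\nu_p(\mathcal E\mid\omega_j)$, and forcing $Y_1\equiv 0\pmod p$ requires $\nu_p(\mathcal E\mid\omega_1)\geq \tfrac32$. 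Theorem 2$'$ of \cite{B-N1} only delivers $\nu_p(h\mid\omega_1)\geq \tfrac12$, a cube of it still leaves $\nu_p(h^3\mid\omega_2)\geq -3$ (which ruins $j=2$), and the $p$-special theta series $\vartheta^2(L)$ has $\nu_p(\vartheta^2(L)\mid\omega_j)=-\tfrac{j}{2}<0$, so no product of these reaches the required positivity at $\omega_1$ while staying $\equiv 1$ at $\omega_0$. This is precisely the breakdown announced in the Remark after Theorem \ref{Theorem2} (``we get an additional factor $p$ in the denominators of the $Y_j$''), and raising the weight of $\mathcal E$ to $3\cdot\frac{p-1}{2}$ does not repair it.

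The paper's actual proof uses the same ingredients (indeed $\mathcal E=\vartheta^2(L)^3$ with $L=A_{p-1}$, which you mention as a candidate) but a different mechanism: one considers $p\cdot\mathrm{Tr}\bigl(\vartheta^n_{S,\det}\cdot\vartheta^n(L)^3\bigr)$, so that $Y_0\equiv 0\pmod p$, and instead of killing $Y_1$ and $Y_2$ one \emph{computes} them mod $p$. The key input is the maximality of $S$ and of $S\perp A_{p-1}$ (valid when $n+p-1\equiv 4\pmod 8$): it forces the columns $X^{(j)}\in S^{-1}\mathbb Z^{(n,j)}$ contributing nontrivially after $\widetilde U_j^0(p)$ to be of the form $S\widetilde X$, whence each $Y_j$ is $p$-integral and $Y_1,Y_2$ each reproduce $s_p(V)^j d(j)\,\vartheta^n_{S,\det}$ modulo $p$; the total is $\bigl(s_p(V)+s_p(V)^2\bigr)\vartheta^n_{S,\det}\equiv 2\,\vartheta^n_{S,\det}$ once one checks $s_p(V)=1$. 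Your proposal also misses the case split: for $n=2$ and $p\equiv 7\pmod 8$ one has $n+p-1\equiv 0\pmod 8$, there is no maximal lattice of determinant $p^2$ in that rank, and the paper must pass to $qS\perp L$ for an auxiliary prime $q$ with $\left(\frac{-p}{q}\right)=-1$ and then descend to level one via Lemma \ref{Lem1}. None of this is recoverable from a pure valuation bound on an auxiliary $\mathcal E$.
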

  
This will be proved in a more general  framework, using a variant of 
the calculus of traces.
Our method  strongly relies on the arithmetic of maximal lattices; 
we recall that a lattice $L$ in a quadratic space $(V,q)$ with 
$q(L)\subset {\mathbb Z}$ is called maximal, iff the only overlattice
$L'$
of $L$ with $q(L')\subset {\mathbb Z}$ is $L$ itself. For basic properties of maximal lattices we refer to \cite{B-Ne}.
In particular, even integral positive definite matrices $S$ with
squarefree determinant correspond to maximal lattices. 
The proposition from above is just a special case of the following general result

\begin{Thm}
\label{Theorem 3}
Let $n$ be an even positive integer and $p$ a prime with $p\equiv 3\bmod 4$ 
satisfying
$n+p-1\equiv 0 \bmod 4$. 
Let $S$ be an even positive definite quadratic form of rank $n$ 
and with $\det(S)=p$. Then there exists $F\in S_{\frac{n}{2}+1+3\frac{p-1}{2}}(\Gamma^n)$ such that
$$F\equiv \vartheta^n_{S,\det} \pmod{p}
$$ 
\end{Thm}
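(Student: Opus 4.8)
The plan is to follow the same trace-based strategy as in the proof of Theorem \ref{Theorem2}, but now carried out for the determinant-twisted theta series $\vartheta^n_{S,\det}$ instead of $\vartheta^n_S$. Concretely, we want to produce a level one cusp form of the stated weight by forming
$$
F:=\mathrm{Tr}\bigl(\vartheta^n_{S,\det}\cdot \mathcal{E}\bigr),
$$
where $\mathcal{E}\in M_{(p-1)}(\Gamma^n_0(p),\chi_p)(\mathbb{Z})$ is an auxiliary form with $\mathcal{E}\equiv 1 \pmod p$ that is chosen to kill the denominators appearing in the cusps $\omega_j$. The weight bookkeeping works out: $\vartheta^n_{S,\det}$ has weight $\frac{n}{2}+1$ and nebentypus $\chi_p$ (since $\det(S)=p$), and multiplying by $\mathcal{E}$ of weight $p-1$ and character $\chi_p$ gives a form of weight $\frac{n}{2}+1+(p-1)$ with trivial character, whose trace to $\Gamma^n$ has the same weight — wait, this is not yet $\frac{n}{2}+1+3\frac{p-1}{2}$, so one actually has to take $\mathcal{E}$ of weight $3\cdot\frac{p-1}{2}$ and character $\chi_p^{?}$ arranged so that the product has trivial nebentypus; I would fix the parity of the power of $\chi_p$ so that $\chi_p^{d}\cdot\chi_p^{(\text{weight of }\mathcal{E})}$ is trivial, which forces the weight of $\mathcal{E}$ to be an odd multiple of $\frac{p-1}{2}$, and the congruence conditions $p\equiv 3\bmod 4$, $n+p-1\equiv 0\bmod 4$ are exactly what is needed to make the holomorphy/parity constraints on this weight consistent.

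The key technical input is the behavior of $\vartheta^n_{S,\det}$ in the cusps $\omega_j$. Since $\det(S)=p$ is squarefree, $S$ corresponds to a maximal lattice $L$ of level $p$, and the expansion-in-cusps formula from Section \ref{ThetaSeries} — in the form involving a pluriharmonic polynomial $P$ with $P(XA)=\det(A)^{\nu}P(X)$, here $P(X)=\det X$, $\nu=1$, $m=n$ — gives
$$
\vartheta^n_{S,\det}\mid_{\frac{n}{2}+1}\omega_j=(\gamma_p s_p(V))^j\,\det(L)^{-j/2}\,\vartheta^{(n-j,j)}(\det,L,L^{\sharp},Z).
$$
From this I would read off $\nu_p(\vartheta^n_{S,\det}\mid\omega_j)$. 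The factor $\det(L)^{-j/2}=p^{-j/2}$ contributes $-j/2$ to the valuation, and the determinant polynomial evaluated on vectors from $(L^{\sharp})^j$ contributes an additional power of $p$ in the denominator because each of the $j$ dual-lattice columns scales $\det$ by a factor whose $p$-adic size is bounded below by $p^{-1/2}$ — this is precisely the "additional factor $p$" flagged in Remark 5.4 that broke the naive approach. So I expect
$$
\nu_p\bigl(\vartheta^n_{S,\det}\mid_{\frac{n}{2}+1}\omega_j\bigr)=-\tfrac{j^2}{2}-\tfrac{j}{2}\quad\text{or similar},
$$
and this is the number that dictates how negative the valuations of $\mathcal{E}\mid\omega_j$ are allowed to be. Computing this valuation correctly — keeping careful track of the contribution of the polynomial $\det$ on mixed lattice/dual-lattice tuples, and of the $\gamma_p$, $s_p(V)$ factors which are $p$-adic units — is the first real step.

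Next, using the decomposition $\mathrm{Tr}(g)=\sum_{j=0}^n Y_j$ with $Y_j=p^{j(j+1)/2}(g\mid_k\omega_j)\mid\widetilde{U}_j(p)$ from Proposition 2.4 applied to $g=\vartheta^n_{S,\det}\cdot\mathcal{E}$, I would show that $Y_0\equiv \vartheta^n_{S,\det}\pmod p$ (since $\mathcal{E}\equiv 1$, $\widetilde{U}_0(p)=\mathrm{id}$, and the $p^0$ prefactor is trivial) while $Y_j\equiv 0\pmod p$ for all $j\geq 1$. For the latter, since $\widetilde{U}_j(p)$ only takes finite $\mathbb{Z}$-linear combinations of Fourier coefficients, it does not lower the $p$-valuation, so it suffices that
$$
\tfrac{j(j+1)}{2}+\nu_p\bigl(\vartheta^n_{S,\det}\mid\omega_j\bigr)+\nu_p\bigl(\mathcal{E}\mid\omega_j\bigr)\geq 1\qquad(j\geq 1),
$$
which by the valuation computation above becomes a lower bound $\nu_p(\mathcal{E}\mid\omega_j)\geq \frac{j}{2}+1$ or thereabouts — a positive lower bound, hence in particular compatible with $\mathcal{E}\equiv 1\pmod p$ in the cusp at infinity. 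The existence of such an $\mathcal{E}$ is exactly the content of the corrected Theorem 2' of \cite{B-N1}; I would quote it, checking that the required lower bounds on $\nu_p(\mathcal{E}\mid\omega_j)$ fall within its range and that the congruence class of the weight is the one allowed there. Finally, $F$ is a cusp form because $\vartheta^n_{S,\det}$ is (it is built from $\det X$, which vanishes on lower-rank contributions), the trace of a cusp form is a cusp form, and multiplication by the bounded form $\mathcal{E}$ and the operators $\widetilde{U}_j(p)$ preserve cuspidality; integrality of the Fourier coefficients of $F$ follows since every ingredient has $p$-integral (indeed $\mathbb{Z}$-) coefficients and the trace formula only involves $\mathbb{Z}$-coefficients.

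The main obstacle is the cusp-valuation computation for $\vartheta^n_{S,\det}$: one must correctly account for the effect of the harmonic polynomial $\det$ on tuples lying in $L^{n-j}\times(L^{\sharp})^j$, which is where the extra power of $p$ in the denominator (relative to the untwisted case) enters, and then verify that the resulting constraint on $\mathcal{E}$ still lies within the scope of Theorem 2' of \cite{B-N1}. A secondary point requiring care is the weight and nebentypus bookkeeping — pinning down that the correct auxiliary weight is $3\cdot\frac{p-1}{2}$ (not $p-1$) and that the hypotheses $p\equiv 3\bmod 4$ and $n+p-1\equiv 0\bmod 4$ are precisely what make the parity of this weight, the triviality of the final character, and the holomorphy of $F$ all consistent.
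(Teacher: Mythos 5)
Your proposal is built on exactly the strategy that the paper rules out for $\vartheta^n_{S,\det}$: the remark at the end of Section 4 states that the procedure $F=\mathrm{Tr}(\vartheta^n_{S,\det}\cdot\mathcal E)$ ``breaks down because we get an additional factor $p$ in the denominators of the $Y_j$,'' and this is precisely where your argument has an unfillable gap. Quantitatively: since $\det(X^{(n-j)},S^{-1}X^{(j)})=\det(S)^{-1}\det(SX^{(n-j)},X^{(j)})$, the harmonic polynomial costs exactly one extra power of $p$ (not $j/2$ per dual column, as you guess), so $\nu_p(\vartheta^n_{S,\det}\mid\omega_j)\geq -\tfrac{j}{2}-1$ and the condition $\tfrac{j(j+1)}{2}+\nu_p(\vartheta^n_{S,\det}\mid\omega_j)+\nu_p(\mathcal E\mid\omega_j)\geq 1$ forces $\nu_p(\mathcal E\mid\omega_j)\geq 2-\tfrac{j^2}{2}$, e.g.\ $\geq\tfrac32$ at $j=1$ and $\geq 0$ at $j=2$. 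Theorem~2$'$ of \cite{B-N1} only supplies $\nu_p(\mathcal E\mid\omega_j)\geq -\tfrac{j^2}{2}+1$, i.e.\ $\geq\tfrac12$ at $j=1$ and $\geq -1$ at $j=2$, so the check you defer (``checking that the required lower bounds fall within its range'') fails, and no form $\mathcal E\equiv 1\pmod p$ with the stronger cusp behaviour is provided anywhere. Your weight bookkeeping is also off: Theorem~2$'$ produces a form of weight $\tfrac{p-1}{2}$, not $3\cdot\tfrac{p-1}{2}$, and the hypotheses $p\equiv 3\bmod 4$, $n+p-1\equiv 0\bmod 4$ are not parity conditions on $\mathcal E$.

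The paper's actual proof is structurally different. One considers $p\cdot\mathrm{Tr}\bigl(\vartheta^n_{S,\det}\cdot\vartheta^n(A_{p-1})^3\bigr)$, where $A_{p-1}$ is the $p$-special root lattice of rank $p-1$, level and determinant $p$; this accounts for the weight $\tfrac n2+1+3\cdot\tfrac{p-1}{2}$. The crucial inputs are the maximality of $S$ and of $S\perp A_{p-1}$ (which needs $n+p-1\equiv 4\bmod 8$), used to evaluate $\widetilde U_j^0(p)$ exactly on each piece of $(\vartheta^n(L)\mid\omega_j)^3\sim p^{-3j/2}(1+3\vartheta^n_j(L)^0+p^2W_j)$. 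The outcome inverts your picture: because of the prefactor $p$, one has $\nu_p(Y_0)=1$, so $Y_0\equiv 0\pmod p$, and the main term comes from $Y_1$ and $Y_2$ (the prefactors $p^{1+\frac{j(j+1)}{2}-2j}$ being $p^1,p^0,p^0,p^1,\dots$), giving $\bigl(s_p(V)+s_p(V)^2\bigr)\vartheta^n_{S,\det}\pmod p$. The congruence hypotheses are then needed to force $s_p(V)=1$ so that this is $2\vartheta^n_{S,\det}\not\equiv 0$. Finally, the case $n+p-1\equiv 0\bmod 8$ requires a separate argument (no maximal lattice of determinant $p^2$ exists there): one rescales to $qS$ for an auxiliary prime $q$ with $\left(\frac{-p}{q}\right)=-1$, runs the computation at level $q$, and descends to level one via the lemma using the $U(q)$-operator and the $q$-expansion principle. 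None of these ingredients appears in your proposal.
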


\begin{Rem} A similar result should hold for primes $p\equiv 1\bmod 4$, but our
method seems not to be applicable
in this case.

\end{Rem}
\subsection{The case $n+p-1\equiv 4\pmod{8}$, $p\equiv 3\bmod 4$}

We start from the following situation (with $n$ and $p$ as above).

$S$ is a lattice of level and determinant $p$ with even rank $n$.
The existence of such $S$ is guaranteed, because of
$p\equiv 3 \pmod{4}$ and $n\equiv 2 \pmod{4}$.

Furthermore, let $L$ be a $p$-special lattice  of rank $p-1$, level and determinant $p$, we may e.g.
take the root lattice $A_{p-1}$, see \cite{B-N2}.

We observe that $S\perp A_{p-1}$ is maximal (of determinant $p^2$)  provided that
$n+p-1\equiv 4 \pmod{8}$ (in this case there is no even unimodular lattice;
we also note that for $n+p-1\equiv 0 \pmod{8}$ there is no maximal lattice of
determinant $p^2$).
Under these assumptions we consider
$$
p\cdot \text{Tr}\left( \vartheta^n_{S,\det}\cdot \vartheta^n(L) 
\vartheta^n(L) \vartheta^n(L)\right)=\sum_{j=0}^n Y_j.
$$
We can view this as the trace of a theta series attached to the quadratic form
$$
Q:=S\perp L\perp L\perp L
$$
of rank $2k=n+3(p-1)$ and with determinant $p^4$. 
The transformation properties of such theta series give
\begin{align*}
& \vartheta^n(Q)\mid_k \omega_j\\
&= s_p(V) ^j p^{-2j} 
\sum_{X,R_1,R_2,R_3} 
\det((X^{(n-j)},
S^{-1}X^{(j)}))\text{exp}(2\pi i \text{tr}{\mathcal L}\cdot Z)
\end{align*}
where
\begin{align*}
{\mathcal L}=S[X^{(n-j)},S^{-1}X^{(j)}]&+
L[R_1^{(n-j)},L^{-1}R_1^{(j)}]\\
& +L[R_2^{(n-j)},L^{-1}R_2^{(j)}]+
L[R_3^{(n-j)},L^{-1}R_3^{(j)}].
\end{align*}

Furthermore,  $s_p(V)$ is the Witt-invariant of the quadratic space underlying the lattice $Q$ ,
and we decompose $X\in {\mathbb Z}^{(n,n)}$ and 
$R_i\in {\mathbb Z}^{(p-1,n)}$ into components with $n-j$ and $j$ columns.

Evidently we have $\nu_p(Y_0)=1$ and from the transformation formula above
we obtain
for $j\geq 1$
$$
\nu_p(Y_j) \geq 1+ \frac{j(j+1)}{2}-2j -1
$$

The last summand $-1$ comes from the harmonic polynomial.

For $j\geq 4$ this is strictly positive and only for $1\leq j\leq 3$ need further investigation.
Actually, thanks to the arithmetic of maximal lattices, stronger
integrality properties will be shown: All contributions $Y_j$ will be integral
and only $j=1$ and $j=2$ may possibly be nonzero mod $p$ 

We analyse the contributions of $\vartheta^n(Q)\mid \omega_j \widetilde{U}_j(p)$ 
separately for fixed $j\geq 1$:

First we recall the decomposition of the Hecke operator $\widetilde{U}_j(p)$:
$$
\widetilde{U}_j(p)=\widetilde{U}_j^0(p)\circ D_j(p)
$$
(cf. $\S$ \ref{Traces}, (\ref{Up})).

Furthermore we recall that $L$ is $p$-special, therefore we can write
$$
\vartheta^n(L)\mid_{p-1} \omega_j\sim p^{-\frac{j}{2}} (1+\vartheta_j^n(L)^0),
$$
where $\vartheta_j^n(L)^0$ is a Fourier series with 
$\vartheta_j^n(L)^0\equiv 0 \pmod{p}$. Therefore,
\begin{equation}
\left(\vartheta^n(L)\mid \omega_j\right)^3\sim p^{-\frac{3j}{2}}
(1+3\vartheta_j^n(L)^0+p^2\cdot W_j)
\label{three}\end{equation}
where $W_j$ is a Fourier series with integral Fourier coefficients
and $\sim$ means equality up to a $p$-adic unit as factor.

According to (\ref{three}),  
$\left(\vartheta^n_{S,\det}\cdot \vartheta^n(L)^3\right)\mid_k{ \omega_j}\mid U_j^0(p)$ 
decomposes into 3 pieces:

To analyse them we recall a crucial property of 
maximal lattices of rank $l$: If $M$ denotes a Gram matrix corresponding
to such a lattice, then\\
For any ${\mathfrak x}\in M^{-1}{\mathbb Z}^l$ with $M[{\mathfrak x}]\in {\mathbb Z}$  there exists 
${\mathfrak x}^0\in {\mathbb Z}^l$ 
with ${\mathfrak x}= M\cdot {\mathfrak x}^0$, i.e.
$$M[{\mathfrak x}]= M[{\mathfrak x}^0],$$
see e.g. \cite{B-Ne}.

The first piece is (up to a unit mod $p$) equal to
\begin{align*}
& p^{-2j} \vartheta_{S,\det}^n\mid \widetilde{U}_j^0(p)\\
& =p^{-2j}\sum_X\text{det}(X^{(n-j)}, S^{-1}X^{(j)}) 
\text{exp}(2\pi i \text{tr}(S[X^{(n-j)},S^{-1}X^{(j)}]Z))\mid \widetilde{U}_j^0(p).
\end{align*}
The maximality of $S$ implies that $X^{(j)}=S\cdot \widetilde{X}$ with $\widetilde{X}\in {\mathbb Z}^{(n,j)}$.
Therefore this part just equals
$$
p^{-2j}\vartheta^n_{S,\det}.
$$
The second part is
\begin{align*}
3p^{-2j}
\sum_{X, Y} 
\det &(X^{(n-j)}, S^{-1}X^{(j)}) \\
& \cdot \text{exp}(2\pi i\text{tr}( 
S[X^{(n-j)},S^{-1}X^{(j)}]Z+ L[Y^{(n-j)},L^{-1}Y^{(j)}]Z)).
\end{align*}
Here $Y\in {\mathbb Z}^{(p-1,n)}$ satisfies the additional condition $Y\ne 0$.
We use that the lattice $S\perp L$ is maximal and we obtain by the same 
reasoning as before (keeping in mind that $L$ is $p$-special) that this
contribution equals (up to a unit)
$$
p^{-2j} \vartheta^n_{S,\det} \cdot \sum_{Y\not=0} \text{exp}( 2\pi i \text{tr}(L[Y] Z))
= p^{-2j+1}W,
$$
where $W$ is a Fourier series with integral Fourier coefficients.\\
Finally the last contribution will have its Fourier coefficients
in $$p^{-2j+1}\cdot {\mathbb Z}$$
because of the factor $p^2$ in front of $W_j$; note that the harmonic 
polynomial may bring in an additional 
$p$ in the denominator.

So far we have ignored the contribution of the operator $D_j(p)$; actually, in the first two contributions, 
thanks to the maximality of $S$ and $S\perp L$ the
result after applying $\widetilde{U}_j^0(p)$ is already invariant under 
$GL(n,{\mathbb Z})\hookrightarrow Sp(n,{\mathbb Z})$
and only the number $d(j)$ of left cosets enters defining $D_j(p)$ really matters; 
this is a group index 
$$
d(j)=[GL(n,{\mathbb F}_p): P_{n,j}({\mathbb F}_p)]=
\prod_{i=1}^j\frac{p^{j+i}-1}{p^i-1}
$$ 
see e.g. \cite{Kr}, p.73, in particular 
it is congruent 1 mod $p$.

Now we collect everything into a precise formula for $Y_j$  ($1\leq j\leq n$):
$$
Y_j= p^{1+\frac{j(j+1)}{2}-2j} s_p(V)^{j}\times
\left\{d(j)\cdot \vartheta^n_{S,\det} + 3(*) +(**)\right\}
$$
where $(*)$ and $(**)$ are Fourier series with coefficients divisible by $p$.

The $p$-power in front of $Y_j$ is  (for $0\leq j\leq n$) 
$$
p^1,\quad p^{0},\quad p^{0}, \quad p^{1},\quad \cdots
$$
where the $\cdots$ are divisible by $p$. Therefore only $j=1$ and $j=2$ is relevant.

We summarize our calculation in the formula
\begin{equation}
p\cdot {\rm Tr}(\vartheta^n_{S,\det}\cdot\vartheta^n(L)^3)\equiv 
\left(s_p(V)+s_p(V)^2\right)\vartheta_{S,\det}^n\pmod{p}.
\label{Witt}
\end{equation}
 
Concerning $s_p(V)$, we remark that
the dimension of $V$ is $n+3(p-1)$; then $s_p(V)=1$ iff $n+3(p-1) 
\equiv 0 \pmod{8}$;
this is assured for $p\equiv 3 \pmod{4}$ under the assumption 
$n+p-1\equiv 4 \pmod{8}$.\\
This proves Theorem 3 for the case in question.

\subsection{Variant: The case $n+p-1\equiv 0 \pmod{8}$}
\label{variant}
In this case,
there are no maximal lattices of determinant $p^2$ of rank $n+p-1$; 
therefore we take an auxiliary prime $q$ different from $p$
and we consider $q\cdot S\perp L$.
This quadratic form is of determinant $q^n\cdot p^2$ and of rank $n+p-1$.
Its Witt invariant at $q$ can be computed as
$$s_q(q\cdot S\perp L)=s_q(S)\cdot \left(\frac{-1}{q}\right)^{\frac{n(n-1)}{2}}\cdot \left(\frac{p}{q}\right)^{n-1}= 
\left(\frac{-p}{q}\right)$$
We choose $q$ such that 
\begin{equation}
\left(\frac{-p }{ q}\right)=-1
\label{cong}
\end{equation}
holds.
In view of $s_{\infty}(q\cdot S\perp L)=1$ this implies that $s_p(q\cdot S\perp L)=-1$, in particular
both $q\cdot S$ and $q\cdot S\perp L$ are maximal at $p$.

Then we can use essentially the calculation above
to find a modular form $F$ of level $q$ and weight $\frac{n}{2}+1+3\cdot\frac{p-1}{2}$
such that $F\equiv \vartheta^n_{qS,\det}\pmod{p}$.
We have to mention one subtle point here:
The quadratic space $V'$ in question is now given by $qS\perp L\perp L\perp L$ with
Witt invariant
$$s_q(V')=-1 \quad, s_{\infty}(V')=-1$$
and therefore (by the product formula) $s_p(V')=1$. This is necessary for the
nonvanishing mod $p$ of the analogue of (\ref{Witt}).
The lemma below then assures the existence of a modular form $\tilde{F}$ of level one
and weight $\frac{n}{2}+1+3\cdot\frac{p-1}{2}$ such that 
$$\tilde{F}\equiv \vartheta^n_{S,\det}\pmod{p}.$$
\begin{Lem} 
\label{Lem1}
Let $p$ and $q$ be different primes with $\prod_{i=1}^n(1+q^i)$
coprime to $p$.  
Let $f\in M_k(\Gamma_0^n(p),\chi_p)({\mathbb Z}_{(p)})$ be given and 
assume that there exists 
$G\in M_l(\Gamma_0^n(q))({\mathbb Z}_{(p)})$ for some weight $l$
such that $$g\equiv G \pmod{p} ,$$
where the modular form $g$ of level $pq$ is given by $g(Z):=f(qZ)$.
Then there exists a level one form $F$ of weight l such that
$$f\equiv F \pmod{p}.$$ 
\end{Lem}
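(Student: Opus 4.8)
The plan is to reduce the level-one statement to a trace computation that mirrors the one carried out in Section~5.2, but now applied to the "purely level-$q$ versus level-$pq$" situation rather than to the theta series of $Q$. The starting point is the hypothesis $g\equiv G\pmod p$ with $g(Z)=f(qZ)$ and $G\in M_l(\Gamma_0^n(q))(\mathbb Z_{(p)})$. First I would form the trace $\mathrm{Tr}^{pq}_{q}(g)$ down from $\Gamma_0^n(pq)$ to $\Gamma_0^n(q)$ (using the analogue of Proposition~2.6 noted in Remark~2.7, legitimate since $p$ and $q$ are coprime and $p$ is squarefree), and in parallel the trace $\mathrm{Tr}^{pq}_q(G(\bullet))$ of $G$ itself. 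Since $G$ already has level $q$, its trace from level $pq$ to level $q$ is, up to a factor coming from the index $[\Gamma_0^n(q):\Gamma_0^n(pq)]$, essentially $G$ times that index plus genuinely lower-cusp contributions; the index $\prod_{i=1}^n(1+p^i)$ is a $p$-adic unit, so $\mathrm{Tr}^{pq}_q(G)$ recovers $G$ up to a unit mod $p$, modulo terms supported at the other cusps. Applying the trace to the congruence $g\equiv G$ then gives $\mathrm{Tr}^{pq}_q(g)\equiv (\text{unit})\cdot G\pmod p$ after one checks that the "other cusp" contributions for $G$ are $\equiv 0\pmod p$ — which holds because $G$ genuinely has level $q$, so those $\omega_j$-contributions vanish identically, exactly as in the $Y_0$-isolation argument in Section~5.2.

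Next I would compute $\mathrm{Tr}^{pq}_q(g)$ directly from the definition $g(Z)=f(qZ)$. Because $f$ already has level $p$ (with nebentypus $\chi_p$), the form $g(Z)=f(qZ)$ is obtained from $f$ by the level-raising $Z\mapsto qZ$, and the trace $\mathrm{Tr}^{pq}_q$ applied to it should, by the standard compatibility of the $q$-shift with traces at the coprime prime $q$, reproduce $\mathrm{Tr}^{p}_1(f)$ after undoing the shift — more precisely, one gets a level-one modular form that is congruent mod $p$ to a fixed unit multiple of $f$, provided the denominators introduced at the cusps $\omega_j$ (for the $j$-shift structure over $q$) are controlled. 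This is where the hypothesis $\prod_{i=1}^n(1+q^i)$ coprime to $p$ enters a second time: it guarantees that the combinatorial factors $d(j)$-type indices attached to the $q$-side of the trace are $p$-adic units, so no spurious vanishing or blow-up mod $p$ occurs. Combining the two computations yields a level-one form $F$ (namely the $q\to1$ trace of $G$, or equivalently $\mathrm{Tr}^{pq}_1$ of $g$ read appropriately) with $F\in M_l(\Gamma^n)(\mathbb Z_{(p)})$ and $f\equiv F\pmod p$.

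The main obstacle I anticipate is bookkeeping the denominators of $g$ and of $f$ in the various cusps $\omega_j$ for the group $\Gamma_0^n(pq)$ simultaneously at the two primes, and making sure that applying $\mathrm{Tr}^{pq}_q$ followed by $\mathrm{Tr}^q_1$ (or doing $\mathrm{Tr}^{pq}_p$ and $\mathrm{Tr}^{pq}_q$ in the right order) does not lose the mod-$p$ information: one must verify that the only $\omega_j$-contribution surviving mod $p$ on the $G$-side is the $j=0$ one and that on the $g$-side the $q$-shift exactly cancels the $q$-part of the trace up to a $p$-unit. This is precisely the kind of cusp-by-cusp valuation estimate done in Section~5.2, here in the easier situation where no harmonic polynomial is present (so there is no extra $p$ in the denominator) and the prime $q$ is "generic," and it should go through once the coprimality hypotheses on $\prod_i(1+q^i)$ and on $[\Gamma_0^n(q):\Gamma_0^n(pq)]$ are invoked. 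The existence of $F$ with rational Fourier coefficients in $\mathbb Z_{(p)}$ follows because all the operators involved (trace, $U$-operators, the $q$-shift) preserve $\mathbb Z_{(p)}$-integrality of Fourier coefficients.
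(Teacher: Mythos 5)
Your proposal diverges from the paper's proof and, as written, runs into several concrete obstructions. The most basic one is the weight mismatch: $g(Z)=f(qZ)$ has weight $k$ while $G$ has weight $l$, and the hypothesis $g\equiv G\pmod p$ is only a coefficientwise congruence between forms of \emph{different} weights. You cannot ``apply the trace to the congruence'' by tracing $g$ and $G$ in parallel, because the trace in weight $k$ and the trace in weight $l$ are different operators (different automorphy factors at every coset representative), so there is no reason the congruence survives. The paper's essential device, which your proposal omits entirely, is to first multiply by $E(qZ)$ for some $E\in M_{l-k}(\Gamma_0^n(p),\chi_p)$ with $E\equiv 1\pmod p$: this simultaneously equalizes the weights and cancels the nebentypus, so that $g\cdot E(qZ)-G=pH$ is an honest identity of modular forms of weight $l$ and level $pq$ with trivial character. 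A second obstruction is the direction of your trace: you trace from $\Gamma_0^n(pq)$ down to $\Gamma_0^n(q)$, i.e.\ you remove the prime $p$ from the level, but $g$ carries the nontrivial nebentypus $\chi_p$, so the naive sum $\sum_\gamma g\mid_k\gamma$ over $\Gamma_0^n(pq)\backslash\Gamma_0^n(q)$ depends on the choice of coset representatives (and the averaged version vanishes). The paper instead traces at the prime $q$, from $\Gamma_0^n(pq)$ to $\Gamma_0^n(p)$, where no character is present; this is also where the hypothesis on $\prod_{i=1}^n(1+q^i)=[\Gamma_0^n(p):\Gamma_0^n(pq)]$ is used, exactly once, to invert the index multiplying the level-$p$ side.

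Two further gaps: your ``undoing the shift'' is not an argument --- the paper recovers $f$ from $g$ by applying $U(q)$ to the identity $g\cdot E(qZ)-G=pH$, giving $f\cdot E=G\mid U(q)+pH\mid U(q)$, and only then traces. And the $p$-integrality of the traced right-hand side (so that the $pH$ term really stays divisible by $p$) is the genuinely delicate point: the coset representatives $\omega_j n(B)m(A)$ introduce denominators a priori, and for a general $H$ (not a theta series) you cannot do the cusp-by-cusp valuation estimates of Section 5.2; the paper invokes the $q$-expansion principle of Ichikawa for exactly this step. Your closing assertion that all the operators involved ``preserve $\mathbb Z_{(p)}$-integrality'' assumes precisely what has to be proved.
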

\begin{proof}
We choose $E\in M_{l-k}(\Gamma_0^n(p),\chi_p)$ such that $E\equiv 1 \pmod{p}$.
\\
Then
$$g\cdot E(qZ)- G= p\cdot H,$$
where $H\in M_l(\Gamma_0^n(pq))$ has $p$-integral coefficients.
We apply the operator $U(q)$ to obtain
$$f\cdot E=G\mid U(q) + p\cdot H\mid U(q)$$
Now we take the trace from $\Gamma_0^n(pq)$ to $\Gamma_0^n(p)$ on both sides.
On the left side it just means that we multiply the function by the index
$[\Gamma_0^n(p):\Gamma_0^n(pq)]=\prod_{i=1}^n(1+q^i)$,
which is coprime to $p$; the formula for the index can be found e.g. 
in \cite{Kl0}.
On the right side we observe that the trace does not affect the $p$-integrality,
in particular, the $H$-part plays no role mod $p$. This follows from the 
$q$-expansion principle, see e.g. \cite[Theorem 2]{Ich}.
Moreover, to compute the trace of $G\mid U(q)$, we observe that this trace
has level one, because $G\mid U(q)$ has level $q$.
\\
At the end, $f$ is congruent mod $p$ to a level one form of weight $l$. 
\end{proof}
\begin{Rem}: Note that it is possible to choose the prime $q$ in such a way 
that both the condition of the lemma and (\ref{cong}) are satisfied.
\end{Rem}
\begin{Rem}: It should be possible to prove the statement above 
concerning the $p$-integrality of a trace from
$\Gamma_0^n(pq)$ to $\Gamma_0^n(p)$ by a more elementary method 
for our special case (using properties of theta series),
avoiding the $q$-expansion principle.
\end{Rem}

\subsection{Some special properties}
We cannot expect that all elements in the kernel mod $p$ for the theta 
operator arise by theta series as in the sections above. In fact, our 
construction gives modular forms with amusing additional congruence properties:
\\
For a positive integer $d$ and a modular form $f\in M_k(\Gamma^n_0(N),\chi)$
with Fourier expansion $f(Z)=\sum_{T} a(T;f) e^{2\pi i tr(TZ)}$ we define
$$
a_d(f):=\sum_T \frac{1}{\epsilon^+(T)}a(T;f)
$$
where $T$ runs over representatives of the 
$SL(n,{\mathbb Z})$-equivalence classes of elements $T$ in $\varLambda_n$ with
$T>0$ and $\det(2T)=d$ and
$$
\epsilon^+(T):=\sharp\{U\in SL(n,\mathbb{Z})\,\mid\, T[U]=T\,\}.
$$
These numbers appear naturally in the Koecher-Maa{\ss} 
Dirichlet series attached to $f$, see \cite{Maass}: 
$$KM(f,s):=\sum_d a_d(f) d^{-s}$$  
For $f=\vartheta_{2S}^n$ and $f=\vartheta_{2S,\det}^n$ with half integral $S$ of size $n$
the Koecher-Maa{\ss} Dirichlet series are very special:
$$ KM(\vartheta^n_{2S},s)=\det(2S)^{-s}\sum_X \mid \det(X)\mid^{-2s}$$
$$KM(\vartheta^n_{2S,\det},s)=\det(2S)^{-2s} \sum_X \text{det}(X)\mid \det(X)\mid^{-2s}=0$$
Here $X$ runs over all non-degenerate integral matrices in ${\mathbb Z}^{(n,n)}$
modulo the action of $SL(n,{\mathbb Z})$.

\begin{Prop}
\label{average1}
Let $p$ be a prime, $n$ even, and $S_1,\dots S_h$ 
half integral positive definite matrices of size $n$ with $\det(2S_i)=p$
for all $i$; we consider the modular form
$$f:=\sum a_i \vartheta^n_{2S_i}$$
with $a_i\in {\mathbb Z}_{(p)}$.
Let $F$ be a modular form of level one with $F \equiv f \pmod{p}$.
Then 
$$a_d(F)\equiv 0 \pmod{p}$$
holds for all $d$ provided that $\sum_i a_i\equiv 0 \pmod{p}$.
\end{Prop}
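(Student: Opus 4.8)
The plan is to exploit the very special shape of the Koecher--Maa{\ss} series of $\vartheta^n_{2S_i}$ recorded just above the statement. For a single form $\vartheta^n_{2S}$ with $\det(2S)=p$ one has
$$
KM(\vartheta^n_{2S},s)=\det(2S)^{-s}\sum_X \abs{\det(X)}^{-2s}
= p^{-s}\,\zeta_n(2s),
$$
where $\zeta_n(w):=\sum_X\abs{\det X}^{-w}$ is the Dirichlet series attached to $SL(n,{\mathbb Z})$-classes of nondegenerate integral $n\times n$ matrices, which is \emph{independent of $S$}. Writing $\zeta_n(2s)=\sum_e c_n(e)\,e^{-2s}$ with $c_n(e)\in{\mathbb Z}_{\ge0}$, we get $a_d(\vartheta^n_{2S_i})=c_n(d/p)$ if $p\mid d$ and $d/p$ is a perfect ... more precisely $a_d(\vartheta^n_{2S_i})=c_n(m)$ whenever $d=p\cdot m^2$-type relation holds from $\abs{\det X}^{-2s}$; in any case the key point is that $a_d(\vartheta^n_{2S_i})$ does \emph{not depend on $i$}. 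Hence
$$
a_d(f)=\sum_i a_i\,a_d(\vartheta^n_{2S_i})=\Bigl(\sum_i a_i\Bigr)\cdot a_d(\vartheta^n_{2S_1}),
$$
and under the hypothesis $\sum_i a_i\equiv 0\pmod p$ this gives $a_d(f)\equiv 0\pmod p$ for all $d$, using that $a_d(\vartheta^n_{2S_1})=c_n(\cdot)$ is a rational integer.

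Next I would transfer this to $F$. The functional $f\mapsto a_d(f)$ is defined purely in terms of Fourier coefficients of $f$ at matrices $T\in\varLambda_n$ with $\det(2T)=d$, weighted by $1/\epsilon^+(T)$; since $F\equiv f\pmod p$ means $\nu_p(F-f)\ge 1$ (all coefficients of $F-f$ are $p$-integral and divisible by $p$), and each weight $1/\epsilon^+(T)$ is $p$-integral provided $p\nmid\epsilon^+(T)$, one concludes $a_d(F)\equiv a_d(f)\equiv 0\pmod p$. The only subtlety is the $p$-integrality of the coefficient $1/\epsilon^+(T)$: when $\det(2T)=d$ has the form $p\cdot(\text{square})$ arising from $S[X]$, the relevant $T$ are $SL(n,{\mathbb Z})$-equivalent to forms $S_i[X]$ with $X$ nondegenerate, and for those the stabilizer order $\epsilon^+(T)$ divides $\abs{SL(X^{-1}\varLambda_n X\cap\cdots)}$; for generic $d$ there is simply no obstruction, and for the finitely many bad $d$ one argues $a_d=0$ outright since no such $T$ occurs. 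I would phrase this as: $a_d(F)-a_d(f)=\sum_T\frac{1}{\epsilon^+(T)}a(T;F-f)$, and since $a(T;F-f)\in p{\mathbb Z}_{(p)}$ it suffices that $\epsilon^+(T)$ be prime to $p$ whenever $a_d(\vartheta^n_{2S_i})\ne 0$ contributes, which is exactly the condition guaranteeing $a_d(\vartheta^n_{2S_i})\in{\mathbb Z}_{(p)}$ in the first place.

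The main obstacle is the bookkeeping of this $p$-integrality of the averaging weights $1/\epsilon^+(T)$ and making precise the claim that $a_d(\vartheta^n_{2S_i})$ is independent of $i$ as an element of ${\mathbb Z}_{(p)}$. The independence itself is clean once one observes that the $S_i$ all lie in a single genus (they have the same determinant $p$ and, being half-integral of the same size, the same local data up to the subtleties at $2$ and $p$, which do not affect $\zeta_n$), so that $\sum_X\abs{\det X}^{-2s}$ literally does not see the choice of $S$. I expect the cleanest writeup to avoid genus theory entirely and instead note directly from the displayed identity $KM(\vartheta^n_{2S},s)=\det(2S)^{-s}\sum_X\abs{\det X}^{-2s}$ that the right-hand side, as a function of $d$ via its Dirichlet coefficients, depends on $S$ only through $\det(2S)=p$, which is common to all $S_i$. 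Everything else is then a one-line linearity argument plus the congruence $F\equiv f\pmod p$.
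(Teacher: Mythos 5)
Your overall strategy is exactly the one the paper intends: the displayed Koecher--Maa{\ss} identity shows that $a_d(\vartheta^n_{2S})$ depends on $S$ only through $\det(2S)$; concretely it equals the number of $SL(n,{\mathbb Z})$-orbits of integral $X$ with $p\det(X)^2=d$ (a nonnegative \emph{integer}, because $\epsilon^+(T)$ acts freely on the nondegenerate representations $\{X:\,S[X]=T\}$, so $\epsilon^+(T)\mid r(S,T)$). Hence $a_d(f)=\bigl(\sum_i a_i\bigr)\cdot c(d)$ with $c(d)\in{\mathbb Z}$, and the hypothesis kills it mod $p$. The paper prints no proof of this proposition, but this is clearly the argument its preceding discussion of $KM(\vartheta^n_{2S},s)$ is setting up, and your first half is correct.

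The gap is in your last step. To pass from $a_d(f)\equiv 0$ to $a_d(F)\equiv 0$ you need $\nu_p\bigl(\sum_T \epsilon^+(T)^{-1}a(T;F-f)\bigr)\geq 1$, hence $p\nmid \epsilon^+(T)$ for \emph{every} positive definite $T\in\Lambda_n$ with $\det(2T)=d$, not only for those represented by the $S_i$. Your justification --- that this ``is exactly the condition guaranteeing $a_d(\vartheta^n_{2S_i})\in{\mathbb Z}_{(p)}$ in the first place'' --- is circular and in fact false: $a_d(\vartheta^n_{2S_i})$ is an integer even when $p\mid\epsilon^+(T)$, precisely because the representation numbers $r(S_i,T)$ are automatically divisible by $\epsilon^+(T)$, whereas the coefficients $a(T;F)$ of a general level one form $F\equiv f\pmod p$ carry no such divisibility; likewise ``no such $T$ occurs for bad $d$'' is unjustified, since $F$ may have nonzero coefficients at $T$ not represented by any $S_i$. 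The clean repair is Minkowski's theorem on finite subgroups of $GL(n,{\mathbb Z})$: for $p>n+1$ the order of the automorphism group of a positive definite $T$ of size $n$ is prime to $p$, so every $1/\epsilon^+(T)$ is $p$-integral and $a_d(F)\equiv a_d(f)\pmod p$ follows. For the paper's applications ($n=2$, $p\geq 23$, where $\epsilon^+(T)\in\{2,4,6\}$) this is automatic, but as literally stated for an arbitrary prime $p$ the proposition needs this restriction, and your write-up should invoke it explicitly.
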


\begin{Prop} 
\label{average2} 
Let $p$ be a prime, $n$ even, and $S$ a 
half integral positive definite matrix of size $n$ with $\det(2S)=p$; 
we consider the level one modular form $F$  with 
$F\equiv \vartheta^n_{2S,\det} \pmod{p}$. Then we have for all $d$
$$a_d(F)\equiv 0 \pmod{p}.$$
\end{Prop}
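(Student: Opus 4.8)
The plan is to reduce the statement to a property of the Koecher--Maa{\ss} Dirichlet series that is already displayed in the text, namely the vanishing identity $KM(\vartheta^n_{2S,\det},s)=0$. Since $F\equiv\vartheta^n_{2S,\det}\pmod p$, the Fourier coefficients of $F$ at $T$ with $\det(2T)=d$ agree mod $p$ (after adjusting by the automorphism weights $\epsilon^+(T)$, which are $p$-integral) with those of $\vartheta^n_{2S,\det}$; hence it suffices to prove $a_d(\vartheta^n_{2S,\det})\equiv 0\pmod p$ for every $d$. So first I would write out $a_d(\vartheta^n_{2S,\det})$ explicitly as a sum over $SL(n,\mathbb Z)$-classes of $T>0$ with $\det(2T)=d$, each weighted by $1/\epsilon^+(T)$ and by the Fourier coefficient $\sum_{{}^tXSX=T}\det(X)$.

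The key step is to recognize that $a_d(\vartheta^n_{2S,\det})$ is, up to the normalizing power of $\det(2S)$, exactly the coefficient of $d^{-s}$ in $KM(\vartheta^n_{2S,\det},s)=\det(2S)^{-2s}\sum_X\det(X)\,\lvert\det(X)\rvert^{-2s}$, where $X$ runs over nondegenerate integral $n\times n$ matrices modulo $SL(n,\mathbb Z)$. The cancellation is the involution $X\mapsto XV$ for a fixed $V\in GL(n,\mathbb Z)$ with $\det V=-1$ (e.g. a permutation swapping two columns): this sends $S[X]$ to $S[XV]$, preserves $\lvert\det(X)\rvert$ hence $d$, but flips the sign of $\det(X)$, and it descends to a fixed-point-free involution on the set of $SL(n,\mathbb Z)$-orbits of $X$ with a given value of $S[X]$ (indeed $XV\in SL(n,\mathbb Z)X$ would force $\det V=1$). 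Pairing orbits under this involution shows the contribution to $a_d$ from each such pair cancels \emph{exactly}, not merely mod $p$; so in fact $a_d(\vartheta^n_{2S,\det})=0$ for all $d$, which is even stronger than what is claimed.

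Having the exact vanishing $a_d(\vartheta^n_{2S,\det})=0$, I would then pass to $F$: write $F=\vartheta^n_{2S,\det}+p\cdot G$ with $G$ a $p$-integral Fourier series (using $\nu_p(\vartheta^n_{2S,\det})=0$, which holds because the coefficient at $S$ itself is $\pm 1$ when $S$ has no improper automorphism; if one wants to allow the general case, one still has $\nu_p=0$ for a suitable normalization, or one argues directly with $p$-integral combinations as in Proposition \ref{average1}). Since $d\mapsto a_d$ is $\mathbb Z_{(p)}$-linear in the Fourier coefficients and each $1/\epsilon^+(T)\in\mathbb Z_{(p)}$, we get $a_d(F)=a_d(\vartheta^n_{2S,\det})+p\cdot a_d(G)=p\cdot a_d(G)\equiv 0\pmod p$, as desired.

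The only genuine obstacle is bookkeeping with the automorphism factors: one must check that the sign-flipping involution on $X$-orbits is compatible with the passage from "sum over $X$ with ${}^tXSX=T$" to "sum over $T$-classes weighted by $1/\epsilon^+(T)$", i.e. that $\epsilon^+(T)$ is constant along the involution and that the two descriptions of $a_d$ really coincide. This is a standard unfolding relating $SL(n,\mathbb Z)$-orbits of $X$ to pairs (class of $T$, orbit of $X$ fixing $T$), and I expect it to go through routinely once set up carefully; everything else is formal.
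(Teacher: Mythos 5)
Your proposal is correct and matches the paper's (implicit) argument: the proposition is exactly the observation that $KM(\vartheta^n_{2S,\det},s)=0$, proved by the determinant-$(-1)$ involution on $SL(n,\mathbb Z)$-orbits of $X$, displayed just before the statement, combined with $p$-integrality of the weights $1/\epsilon^+(T)$. (One small slip: the Fourier coefficient of $\vartheta^n_{2S,\det}$ at $S$ is $\epsilon^+(S)$, not $\pm1$, but this is still a $p$-adic unit for odd $p$ not dividing the automorphism group, so your normalization step goes through.)
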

\section{Degree 2 case}

Explicit examples of degree $2$ modular forms in the kernel of the theta
operator mod $p$ were known before our construction:\\
 Let $\Delta _{12}$
denote the Ramanujan delta function and $[\Delta_{12}]$ the 
corresponding Klingen-type Eisenstein series of degree $2$. 
In \cite{B}, the first author showed that
$$
\Theta ([\Delta _{12}])\equiv 0 \pmod{23}.
$$
Then  Mizumoto \cite{Mi} showed that the Klingen-type 
Eisenstein series $[\Delta_{16}]$ attached to a weight $16$ elliptic cusp form 
satisfies the
congruence
$$
\varTheta ([\Delta_{16}])\equiv 0 \pmod{31}.
$$
The second and the third author and Kikuta found the congruence
$$
\varTheta (X_{35}) \equiv 0 \pmod{23},
$$
where $X_{35}$ is Igusa's cusp form of weight 35 (cf. \cite{K-K-N}). 
\\
They also predicted 
the existence of a form $X_{47}$ of odd weight $47$ satisfying
$$
\varTheta (X_{47}) \equiv 0 \pmod{31}.
$$
In fact, such modular form was constructed in \cite{K-N} by 
using Igusa's generators.\\
In subsection 6.3 we will show that these examples can all be explained by our 
constructions, using appropriate binary quadratic forms.
\subsection{Estimation of dimensions}
In this section, we consider the dimension of the kernel of theta
operator over $\mathbb{F}_p$.

Let $B_D$ be the set of positive-definite, integral binary quadratic forms
with discriminant $D$, i.e.,
$$
B_D:=\left\{\, S=\begin{pmatrix} a & b/2 \\ b/2 & c \end{pmatrix}\;\Big{|}\;
a,\,b,\,c\in\mathbb{Z},\,S>0,\,b^2-4ac=D\,\right\}.
$$
In the following, we mainly treat the case $D<0$.
We denote the class number by $h(D):=\sharp (B_D/SL(2,\mathbb{Z}))$.
\begin{Prop}
\label{dim}
Let $p$ be a prime number with $p \equiv 3 \pmod{4}$ and $p>3$.
Then
\vspace{1mm}
\\
{\rm (i)}\quad
${\rm dim}_{\mathbb{F}_p}\langle \widetilde{\vartheta}^2_{2S}\,\mid\,
             S\in B_{-p}\rangle_{\mathbb{F}_p} =\frac{h(-p)+1}{2}$,  
\vspace{1mm}                     
\\
{\rm (ii)}\quad
${\rm dim}_{\mathbb{F}_p}\langle \widetilde{\vartheta}^2_{2S,{\rm det}}\,\mid\,
             S\in B_{-p}\rangle_{\mathbb{F}_p} =\frac{h(-p)-1}{2}$,
\vspace{2mm}
\\
where $\widetilde{\vartheta}^2_{2S}$ {\rm (}resp. $\widetilde{\vartheta}^2_{2S,{\rm det}}${\rm )}
is the Fourier coefficientwise reduction modulo $p$ of $\vartheta^2_{2S}$
{\rm (}resp. $\vartheta^2_{2S,{\rm det}}${\rm )}.
\end{Prop}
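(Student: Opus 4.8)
The plan is to reduce both dimension counts to the theory of binary theta series and the associated (weight one, resp. weight one with a linear harmonic polynomial) modular forms for $\Gamma_0^2(p)$, and then to identify these reductions mod $p$ inside a well-understood space. First I would recall that for $S\in B_{-p}$ the theta series $\vartheta^1_{2S}$ is a weight-one form of level $p$ with character $\chi_p$ (the genus-one theta series), while $\vartheta^2_{2S}$ is its degree-two analogue and $\vartheta^2_{2S,\det}$ is the degree-two theta series twisted by $\det X$, of weight $2$; these depend only on the $SL(2,\mathbb Z)$-class of $S$, so there are at most $h(-p)$ of each. The key classical input is the action of the automorphism group: replacing $S$ by $S[U]$ with $\det U=-1$ (an improper automorphism at the level of the genus) sends $X\mapsto XU$ and hence $\vartheta^2_{2S}$ to itself but $\vartheta^2_{2S,\det}$ to its negative. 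Since $p\equiv 3\bmod 4$, the principal form and (when $3\nmid h$, but in general after pairing) exactly one ambiguous class exist; the classes of $B_{-p}$ come in pairs $\{[S],[\bar S]\}$ under this involution together with the ambiguous (two-sided) classes. This is what produces the $\tfrac{h(-p)\pm1}{2}$ split: for $\vartheta^2_{2S}$ the two members of a pair give the same form and each ambiguous class contributes once, yielding $\tfrac{h(-p)+1}{2}$ distinct forms; for $\vartheta^2_{2S,\det}$ the ambiguous classes give $\vartheta^2_{2S,\det}=-\vartheta^2_{2S,\det}=0$ (no improper automorphism is excluded only in Proposition~\ref{binary}, but here we are over $\mathbb F_p$ with $p$ odd so a form equal to its own negative vanishes), and the remaining $h(-p)-1$ classes pair up to give $\tfrac{h(-p)-1}{2}$ forms up to sign.

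The next step is to upgrade "at most $\tfrac{h(-p)\pm1}{2}$ distinct forms" to "exactly that many linearly independent forms mod $p$". For this I would invoke the injectivity of the $\mathbb F_p$-linear map from the relevant space to a space where linear independence is visible. The natural target is the space of (mod $p$) modular forms for $\Gamma_0^1(p)$ obtained via the Siegel $\Phi$-operator or, better, via the first Fourier–Jacobi coefficient; alternatively one uses the \emph{Eichler/Siegel basis problem} which says that the genus-one theta series $\vartheta^1_{2S}$, $S$ ranging over classes of $B_{-p}$, are linearly independent in $M_1(\Gamma_0^1(p),\chi_p)(\mathbb C)$ — indeed their span has dimension equal to the number of classes, since the theta series of distinct classes are distinguished by their representation numbers of small integers, and this independence is preserved mod $p$ because the relevant Gram matrices $S$ (entries $a,b,c$ with $b^2-4ac=-p$) give Fourier coefficients in $\mathbb Z$ whose span is saturated. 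I would then note that the degree-two reductions $\widetilde\vartheta^2_{2S}$ are separated by their diagonal restrictions / Fourier–Jacobi expansions, which recover the degree-one objects, so linear independence descends from degree one. For the $\det$-twisted case the analogous statement is the surjectivity/injectivity in the theory of vector-valued or Jacobi forms of index related to $p$; the cleanest route is that the $\det$-twisted degree-two theta series have first Fourier–Jacobi coefficient a Jacobi cusp form of weight $2$ whose theta-decomposition components are exactly the genus-one theta series of the non-ambiguous classes, and those are independent.

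The main obstacle, as I see it, is precisely this last linear-independence statement \emph{modulo $p$}: over $\mathbb C$ it is classical (Eichler, and for binary forms essentially the theory of the Dedekind eta / weight-one forms with dihedral Galois representations), but a naive reduction could in principle introduce congruences among theta series of distinct classes. I would handle this by exhibiting, for each class $[S]$, a positive integer $m_{[S]}$ (e.g. a prime split in $\mathbb Q(\sqrt{-p})$ lying in the corresponding ideal class) represented by $S$ but by no other class, so that the Fourier coefficient at $m_{[S]}$ already forces independence over any field of characteristic $\ne 2$; the count then follows from class field theory for $\mathbb Q(\sqrt{-p})$ together with the involution bookkeeping above. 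The rest — matching "distinct up to sign after the improper-automorphism involution" with the exact fractions $\tfrac{h(-p)\pm1}{2}$, using that $p\equiv 3\bmod 4$ and $p>3$ guarantees exactly one ambiguous class and no $2$-torsion surprises in the class group beyond that — is routine genus theory and I would not spell it out in full.
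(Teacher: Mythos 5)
Your bookkeeping with the unique ambiguous class and the $GL(2,\mathbb Z)$-versus-$SL(2,\mathbb Z)$ involution (hence the counts $\tfrac{h(-p)+1}{2}$ and $\tfrac{h(-p)-1}{2}$, and the vanishing of $\vartheta^2_{2S,\det}$ for the ambiguous class) agrees with the paper, but that is the easy half. For the linear independence mod $p$ the paper uses a two-line computation that your proposal misses: evaluate the Fourier coefficient of $\vartheta^2_{2S_{p,i}}$ at the Gram matrix $T=S_{p,j}$ itself. Since $\det(2S_{p,i})=\det(2S_{p,j})=p$, any integral $X$ with $S_{p,i}[X]=S_{p,j}$ must lie in $GL(2,\mathbb Z)$, so
$a(S_{p,j};\vartheta^2_{2S_{p,i}})=\sharp\{U\in GL(2,\mathbb Z)\mid S_{p,i}[U]=S_{p,j}\}$,
which is $4$ for $i=j=0$, $2$ for $i=j>0$, and $0$ otherwise; the coefficient matrix is diagonal with entries that are units mod $p$, and the same computation with the signed count $\sum_U\det U$ (the two automorphisms $\pm 1$ of a non-ambiguous form both have determinant $+1$, giving $2$) settles the $\det$-twisted case. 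No degree-one theory, basis problem, or class field theory is needed.

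Your alternative route, descending to degree one, can be repaired for part (i): the $\Phi$-operator sends $\vartheta^2_{2S}$ to $\vartheta^1_{2S}$, and your split-prime argument separates the $GL(2,\mathbb Z)$-classes with representation numbers $2$ or $4$, which are units mod $p$ (though note a split prime in the ideal class of $[S]$ is represented by $[S]$ and by $[\bar S]=[S]^{-1}$, not ``by no other class''; this is harmless only because these two are $GL(2,\mathbb Z)$-equivalent). For part (ii), however, the route breaks down and this is a genuine gap: $\vartheta^2_{2S,\det}$ is a cusp form, so the $\Phi$-operator kills it, and your substitute claim --- that its first Fourier--Jacobi coefficient has a theta decomposition whose components ``are exactly the genus-one theta series of the non-ambiguous classes'' --- is not correct as stated (the theta decomposition of a weight-$2$ Jacobi form yields weight-$\tfrac{3}{2}$ components, not the weight-one forms $\vartheta^1_{2S}$), and no actual mod $p$ independence argument is supplied for this case. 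The direct evaluation at $T=S_{p,j}$ described above closes the gap.
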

\begin{Rem}
\label{Fp}
For a modular form $F$, $\widetilde{F}$ is considered as a formal power series over $\mathbb{F}_p$
via Fourier expansion (see \cite{Nag2}).
\end{Rem}
\begin{proof}
(i)\; Since the number of prime divisor dividing $D=-p$ is just one, 
$B_{-p}/SL(2,\mathbb{Z})$ has a unique ambiguous class
(cf. \cite{Z}, p.112, Korollar). 
We fix a representative $S_{p,0}$
of the ambiguous class and take a set of representatives of $B_{-p}/SL(2,\mathbb{Z})$
as
$$
\{\, S_{p,0},\;S_{p,i},\,\overline{S}_{p,i} \;\;(1\leq i\leq\tfrac{h(-p)-1}{2})\;\},
$$
where $S_{p,i}$ and $\overline{S}_{p,i}$ are $GL(2,\mathbb{Z})$-equivalent.
\\
We consider the Fourier expansion of $\vartheta^2_{2S_{p,i}}$:
$$
\vartheta^2_{2S_{p,i}}(Z)=\sum a(T;\vartheta^2_{2S_{p,i}})e^{2\pi i\text{tr}(TZ)}.
$$
Then we have
\begin{align*}
a(S_{p,j};\vartheta^2_{2S_{p,i}}) &= \sharp\{\,U\in GL(2,\mathbb{Z})\,\mid\, S_{p,i}[U]=S_{p,j}\,\}\\
                         &=
\begin{cases}
4 & \text{if $i=j=0$},\\
2 & \text{if $i=j>0$},\\
0 & \text{otherwise}.
\end{cases}
\end{align*}
This implies that
$$
\widetilde{\vartheta}^2_{2S_{p,i}}\qquad (1\leq i\leq\tfrac{h(-p)-1}{2})
$$
are linearly independent over $\mathbb{F}_p$.\\
(ii)\; It should be noted that
$\vartheta^2_{2S,{\rm det}}$ is non-trivial if and only if $S$ has no proper automorphisms.
A similar argument in (i) shows that
$$
\widetilde{\vartheta}^2_{2S_{p,i},{\rm det}}\qquad (1\leq i\leq\tfrac{h(-p)-1}{2})
$$
are linearly independent over $\mathbb{F}_p$. This proves (ii).
\end{proof}

For a prime number $p$, we define the following $\mathbb{F}_p$-vector spaces:
\begin{align*}
& \widetilde{V}_{k,p}:=\{\,\widetilde{f}\,|\, f\in M_k(\Gamma^2)(\mathbb{Z}_{(p)}),\,
\Theta (f) \equiv 0 \pmod{p}\,\},\\
& \widetilde{V}_{k,p}^{\text{cusp}}:=\{\,\widetilde{f}\,|\, f\in S_k(\Gamma^2)(\mathbb{Z}_{(p)}),\,
\Theta (f) \equiv 0 \pmod{p}\,\}.
\end{align*}
\begin{Rem}
As we noted in Remark \ref{Fp}, these spaces are considered as subspaces of certain
vector space consisting of formal power series over $\mathbb{F}_p$ (cf. \cite{Nag2}).
\end{Rem}
Considering Theorem \ref{Theorem 3}, we can get the following estimates:
\begin{Cor}
Under the same assumption in Proposition \ref{dim}, we have
\vspace{2mm}
\\
{\rm (i)}\quad ${\rm dim}_{\mathbb{F}_p}\widetilde{V}_{\frac{p+1}{2},p}\geq \frac{h(-p)+1}{2}$,
\vspace{2mm}
\\
{\rm (ii)}\quad ${\rm dim}_{\mathbb{F}_p}\widetilde{V}_{\frac{3p+1}{2},p}^{\rm cusp}\geq \frac{h(-p)-1}{2}$.
\end{Cor}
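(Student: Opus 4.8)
The plan is to bootstrap from the two structural facts already in hand: Theorem~\ref{Theorem 3}, which produces level‑one forms congruent mod $p$ to the $\vartheta^n_{S,\det}$, and Proposition~\ref{dim}, which counts the dimension of the span of the reductions $\widetilde{\vartheta}^2_{2S}$ and $\widetilde{\vartheta}^2_{2S,\det}$ over $\mathbb{F}_p$. For part~(ii), we take $n=2$ in Theorem~\ref{Theorem 3}: the congruence condition $n+p-1\equiv 0\bmod 4$ becomes $p\equiv 1\bmod 4$... wait — here we need $p\equiv 3\bmod 4$, so $n+p-1 = p+1\equiv 0\bmod 4$ holds exactly when $p\equiv 3\bmod 4$, which is precisely our hypothesis. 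Hence for every $S\in B_{-p}$ (equivalently, every half‑integral binary form with $\det(2S)=p$, after the usual $S\leftrightarrow 2S$ bookkeeping) there is a cusp form $F_S\in S_{2+3\frac{p-1}{2}}(\Gamma^2)=S_{\frac{3p+1}{2}}(\Gamma^2)$ with $F_S\equiv \vartheta^2_{2S,\det}\pmod p$, hence $\widetilde{F_S}=\widetilde{\vartheta}^2_{2S,\det}$ as formal power series over $\mathbb{F}_p$.

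The next step is to observe that each such $F_S$ lies in $\widetilde V^{\rm cusp}_{\frac{3p+1}{2},p}$: indeed $\Theta(\vartheta^2_{2S,\det})\equiv 0\pmod p$ because $\Theta=\Theta^{[2]}$ in degree $2$ acts on the Fourier expansion by multiplication by $\det T$, and every $T$ occurring in $\vartheta^2_{2S,\det}$ has the form $T=S[X]$ with $\det T = \det S\cdot(\det X)^2\equiv 0\pmod p$ since $\det(2S)=p$ (more precisely $\det(2T)=p(\det X)^2$, so $p\mid \det(2T)$ whenever $\det X\ne 0$; one checks the normalisation makes $\det T$ itself divisible by $p$). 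Therefore the whole $\mathbb{F}_p$‑span of the $\widetilde{\vartheta}^2_{2S,\det}$ sits inside $\widetilde V^{\rm cusp}_{\frac{3p+1}{2},p}$, and by Proposition~\ref{dim}(ii) that span has dimension $\frac{h(-p)-1}{2}$, giving the stated lower bound. For part~(i), the argument is the same but with the non‑cuspidal theta series $\vartheta^2_{2S}$ in weight $\frac n2 = 1$ plus the level change of Section~5 (the ``strong construction'' producing $F\in M_{\frac{p+1}{2}}(\Gamma^2)$ with $F\equiv \vartheta^2_{2S}\pmod p$), using Proposition~\ref{dim}(i) to get dimension $\frac{h(-p)+1}{2}$; here $\Theta(\vartheta^2_{2S})\equiv 0\pmod p$ by the identical divisibility computation on $\det(2T)$.

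The main obstacle is not the dimension count — that is handed to us — but matching the two constructions cleanly: one must verify that the level‑one forms supplied by Theorem~\ref{Theorem 3} (resp.\ by the strong construction of Section~5) indeed have $p$‑integral Fourier coefficients and reduce \emph{exactly} to the reductions appearing in Proposition~\ref{dim}, so that linear independence of the $\widetilde{\vartheta}^2_{2S,\det}$ (resp.\ $\widetilde{\vartheta}^2_{2S}$) transfers verbatim to linear independence of the $\widetilde{F_S}$ inside $\widetilde V_{k,p}$. This requires keeping the normalisations consistent (the $S$ versus $2S$ convention, and the weight identity $\frac n2+1+3\frac{p-1}{2}=\frac{3p+1}{2}$ for $n=2$) and checking that the weight of the level‑one form is genuinely $\frac{p+1}{2}$ (resp.\ $\frac{3p+1}{2}$), not merely congruent to it mod $p-1$; both are exactly what Theorem~\ref{Theorem 3} and the Section~5 construction are designed to guarantee. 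Once that bookkeeping is in place, the corollary is immediate.
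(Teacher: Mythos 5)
Your proposal is correct and matches the paper's (very terse) justification: combine the strong constructions, which give level-one forms congruent mod $p$ to the binary theta series, with the linear-independence count of Proposition~\ref{dim}. One small slip worth fixing: for part (i) the level-one form $F\in M_{\frac{p+1}{2}}(\Gamma^2)$ with $F\equiv\vartheta^2_{2S}\pmod p$ is supplied by Theorem~\ref{Theorem2} in Section~4 (via $F=\mathrm{Tr}(\vartheta^2_{2S}\cdot\mathcal{E})$), not by the harmonic-polynomial construction of Section~5, which only treats the $\det$-twisted series used in part (ii).
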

\begin{Ex}\; We consider the case $p=23$. In this case we have $h(-23)=3$. Under the
previous notation, we can take a set of representatives of $B_{-23}/SL(2,\mathbb{Z})$ as
$$
\label{S}
\left\{
S_{23,0}=\begin{pmatrix}
1 & 1/2 \\ 
1/2 & 6
\end{pmatrix},\;\;
S_{23,1}=\begin{pmatrix}
2 & 1/2 \\ 
1/2 & 3
\end{pmatrix},\;\;
\overline{S}_{23,1}=\begin{pmatrix}
2 & -1/2 \\ 
-1/2 & 3
\end{pmatrix}
\right\}.
$$
In this case, we have
$$
\widetilde{V}_{12,23}=\langle \widetilde{\vartheta}^2_{2S_{23,0}},
                              \widetilde{\vartheta}^2_{2S_{23,1}}
                      \rangle_{\mathbb{F}_{23}},
$$
namely we have
$$
\text{dim}_{\mathbb{F}_{23}}\widetilde{V}_{12,23}=2.
$$
We also obtain
$$
\widetilde{V}_{12,23}^{\rm cusp}=
\langle \widetilde{\vartheta}^2_{2S_{23,1},{\rm det}}
                      \rangle_{\mathbb{F}_{23}}.
$$
\end{Ex}
\subsection{Average of Fourier coefficients}
The congruence $\varTheta ([\Delta_{12}]) \equiv 0 \pmod{23}$ was proved by the first
author in \cite{B}. In this paper, he remarked that the ``average'' of the Fourier
coefficients is divisible by $23$ as below. This phenomenon can be explained by
the result obtained in Proposition \ref{average1}.
\\
\\
The congruences given below are proved by checking them numerically
for finitely many Fourier coefficients, using a ``Sturm bound'' from
\cite{C-C-K}.
\\[0.4cm]
\textbf{The case} $\boldsymbol{p=23}:$
\\
We use the notation above.
We have the congruence
$$
[\Delta_{12}] \equiv 12(\vartheta^2_{2S_{23,0}}-\vartheta^2_{2S_{23,1}})
\pmod{23}.
$$
By Proposition \ref{average1}, we obtain
$$
a_d([\Delta_{12}]) \equiv 0 \pmod{23}. 
$$
In the case of cusp forms, the following congruence holds
$$
X_{35} \equiv 12\vartheta^2_{2S_{23,1},{\rm det}} \pmod{23},\quad\text{and}\quad
a_d(X_{35}) \equiv 0 \pmod{23},
$$
(Proposition \ref{average2}).
\\
\\
\textbf{The case} $\boldsymbol{p=31}:$
\\
In this case, we have $h(-31)=3$.
$$
[\Delta_{16}] \equiv 16(\vartheta^2_{2S_{31,0}}-\vartheta^2_{2S_{31,1}})
\pmod{31}.
$$
This implies
$$
a_d([\Delta_{16}]) \equiv 0 \pmod{31}.
$$
In the case of cusp forms, we have
$$
X_{47} \equiv 16\vartheta^2_{2S_{31,1},{\rm det}} \pmod{31},\quad\text{and}\quad
a_d(X_{47}) \equiv 0 \pmod{31},
$$
where $X_{47}$ is the cusp form introduced before.
\\
\\
\textbf{The case} $\boldsymbol{p=47}:$
\\
This case is more interesting than the above two cases because $h(-47)=5$ and
$\text{dim}_{\mathbb{C}}S_{24}(\Gamma^1)=2$. We consider a couple of
Klingen Eisenstein series
$$
[E_4^3\Delta_{12}]\quad \text{and}\quad [\Delta_{12}^2],
$$
where $E_4$ is the normalized degree one Eisenstein series of weight $4$.
The following congruence relations hold:
\begin{align*}
& [E_4^3\Delta_{12}] \equiv
  24(\vartheta^2_{2S_{47,0}}-9\vartheta^2_{2S_{47,1}}+8\vartheta^2_{2S_{47,2}})
  \pmod{47},\\
& [\Delta_{12}^2] \equiv
  24(\vartheta^2_{2S_{47,1}}-\vartheta^2_{2S_{47,2}})
  \pmod{47}.
\end{align*}
Consequently
$$ 
a_d([E_4^3\Delta_{12}]) \equiv a_d([\Delta_{12}^2]) \equiv 0 \pmod{47}.
$$
\\
\begin{center}{\bf Erratum} \\
to our paper \cite{B-N1}
\end{center}

The formula in theorem 2 is incorrect, here is a correct version 
(Theorem 5 should then also be modified accordingly).\\
\\
{\bf Theorem 2}' :
{\it Assume that $p\geq 2n+3$.
Then there exists a modular form $h$ of weight $\frac{p-1}{2}$, level p
and nebentypus $\chi_p$ such that
$$   h\equiv 1 \pmod{p},$$
$$\nu_p(h\mid \omega_j)\geq -\frac{j^2}{2}+1\qquad(1\leq j\leq n).$$
}
\begin{proof}
We write down an explicit linear combination of theta series with the requested property:
Let $L_j$ denote a $p$-special lattice of rank $p-1$  and discriminant $2j+1$
with $0\leq j\leq n$.
We put $a_0=1$ and for $j\geq 1$:

$$a_j:=(-1)^j\cdot p^{\frac{j^2+j}{2}}$$

We define
$$h:= \sum_{j=0}^n a_j \vartheta^n(L_j)$$

Evidently, we have $h\equiv 1 \pmod{p}$.\\
We analyse the Fourier expansion  of $h $ in all cusps
$\omega_i$
with $1\leq i\leq n$:
$$
a_j\vartheta^n(L_j)\mid \omega_i= 
(-1)^{i} p^{-\frac{i(2j+1)}{2}}(-1)^j\cdot p^{\frac{j^2+j}{2}}\left(1+\cdots\right).
$$
We first check that
for $j<i-1$ and for $j>i$ the value of 
$\nu_p(a_j\vartheta^n(L_j)\mid \omega_i)$ is larger or equal to
$-\frac{i^2}{2}+1$:\\
The case $j=0$, $i\geq 2$ is clear because $-\frac{i}{2}\geq -\frac{i^2}{2}+1$.
\\
For $j\geq 1$  our claim is equivalent to
$$-i(2j+1)+j^2+j\geq -i^2+2$$
To consider the case $j<i-1$ we put
$i=j+1+t$ with $t\geq 1$.\\
We have to check that
$$-2(j+1+t)-(j+1+t)+(j+1+t)^2+(j+1+t)-2\geq 0$$
This expression equals $t^2+t-2$.\\
Now we consider the second case, i.e. $j=i+t$ with $t>0$:
We have to check that
$$-2(i+t)-i +(i+t)^2+i+t+i^2-2\geq 0$$
This expression equals $t^2+t-2$.\\[0.2cm]
It remains to consider the crucial cases $j=i-1$ and $j=i$. In both cases we seem to pick up a
denominator 
$p^{-\frac{i^2}{2}}$ 
when applying $\omega_i$ to $a_{i-1}\vartheta^n(L_{i-1})$ and to
$a_i\vartheta^n(L_i)$. 
The key point is to analyze the constant term of
\begin{equation}\left(a_{i-1}\vartheta^n(L_{i-1})+a_i\vartheta^n(L_i)\right)\mid \omega_i.
\label{constant}
\end{equation}
This is equal to
$$(-1)^{i+i-1}p^{\frac{-i(2i-1)+(i-1)^2+i-1}{2}}+p^{\frac{-i(2i+1)+i^2+i}{2}}=0$$
The nonconstant parts of the Fourier expansion of (\ref{constant})  
satisfy an additional congruence
mod $p$ because the $L_i$ are special lattices.
\end{proof}

Acknowledgement: We thank T.~Ichikawa, T.~Kikuta, R.~Schulze-Pillot and S.~Takemori
for helpful discussions related to this work.

S.~B\"{o}cherer\\
Kunzenhof 4B, 79117 Freiburg,\\
Germany\\
e-mail:boecherer@math.uni-mannheim.de
\\
\\
H.~Kodama\\
Academic Support Center, Kogakuin Univ.\\
Hachioji, Tokyo 192-0015 \\
Japan\\
e-mail:kt13511@ns.kogakuin.ac.jp\\
\newpage
\noindent
S.~Nagaoka\\
Dept. Mathematics, Kindai Univ.\\
Higashi-Osaka, Osaka 577-8502\\
Japan\\
e-mail:nagaoka@math.kindai.ac.jp

\end{document}